\theoremstyle{change}
\newcommand{\leftexp}[2]{{\vphantom{#2}}^{#1}%
      \kern-\scriptspace%
      {#2}}
\renewcommand{\Im}{\mathrm{Im}}
\renewcommand{\Re}{\mathrm{Re}}
\renewcommand{\H}{\mathbb H}
\newcommand{\A}{{\mathbb A}}
\newcommand{\Q}{{\mathbb Q}}
\newcommand{\Z}{{\mathbb Z}}
\newcommand{\R}{{\mathbb R}}
\newcommand{\C}{{\mathbb C}}
\newcommand{\bs}{\backslash}
\renewcommand{\O}{{\mathcal O}}
\newcommand{\GL}{{\rm GL}}
\newcommand{\GSp}{{\rm GSp}}
\newcommand{\Sp}{{\rm Sp}}
\newcommand{\Aut}{{\rm Aut}}
\newcommand{\sgn}{{\rm sgn}}
\newcommand{\sym}{{\rm sym}}
\newcommand{\mat}[4]{{\setlength{\arraycolsep}{0.5mm}\left[
\begin{smallmatrix}#1&#2\\#3&#4\end{smallmatrix}\right]}}
\newcommand{\qed}{\hspace*{\fill}\rule{1ex}{1ex}}
\newcommand{\forget}[1]{}
\def\qdots{\mathinner{\mkern1mu\raise0pt\vbox{\kern7pt\hbox{.}}\mkern2mu
\raise3.4pt\hbox{.}\mkern2mu\raise7pt\hbox{.}\mkern1mu}}
\newenvironment{proof}{\vspace{1ex}\noindent{\it Proof.}\hspace{0.1em}}
	{\hfill\qed\vspace{2ex}}
\newenvironment{bsmallmatrix}{\left[\begin{smallmatrix}}{\end{smallmatrix}\right]}
\newtheorem{lemma}{Lemma.}[section]
\newtheorem{theorem}[lemma]{Theorem.}
\newtheorem{proposition}[lemma]{Proposition.}
\newtheorem{remark}[lemma]{Remark.}
\begin{document}

\bibliographystyle{plain}
\thispagestyle{empty}
\begin{center}
     {\bf\Large Integrality and cuspidality of pullbacks of nearly holomorphic Siegel Eisenstein series}

 \vspace{3ex}
 Ameya Pitale, Abhishek Saha, Ralf Schmidt
\end{center}

\begin{abstract}We study nearly holomorphic Siegel Eisenstein series of general levels and characters on $\H_{2n}$, the Siegel upper half space of degree $2n$. We prove that   the Fourier coefficients of these Eisenstein series (once suitably normalized) lie in the ring of integers of $\Q_p$ for all sufficiently large primes $p$. We also prove that the pullbacks of these Eisenstein series to $\H_n \times \H_n$ are cuspidal under certain assumptions.
\end{abstract}

\tableofcontents

\section{Introduction}Let $\H_n$ denote the Siegel upper half space of degree $n$ on which the group $\Sp_{2n}(\R)$ acts in the usual way. In the arithmetic and analytic theory of $L$-functions attached to automorphic forms on $\GSp_{2n}$, a major role is played by pullbacks (i.e., restrictions) of Siegel Eisenstein series from $\H_{2n}$ to $\H_n \times \H_n$. The importance of such pullbacks has been realized at least since the early 1980s when a remarkable integral representation (the pullback formula) for the standard $L$-functions attached to Siegel cusp forms  was discovered  by Garrett \cite{gar83}. Subsequently, Shimura obtained a similar formula  in a wide variety of contexts (including other groups). We refer to Shimura's books \cite{shibook1, shibook2} for further details.
%The pullback formula was applied by B\"ocherer \cite{boch1985}, Mizumoto \cite{miz}, and others to study the associated $L$-functions.
We note also that this method of obtaining integral representations for $L$-functions using pullbacks of Eisenstein series  may be viewed as a special case of the ``doubling method" of Piatetski-Shapiro and Rallis \cite{psrallis83, psrallis87}.

In the last three decades, the pullback formula has been used to prove a host of results related  to Siegel cusp forms and their $L$-functions. We refer the reader to the introduction of our recent paper \cite{PSS17} for a more detailed history. As explained in \cite{PSS17}, most of the previous results involved significant restrictions on the levels or archimedean parameters of the Siegel cusp forms involved. These were removed in \cite{PSS17} to a large extent, where we obtained an explicit integral representation for the twisted standard $L$-function attached to a vector-valued Siegel cusp form of arbitrary level and archimedean parameter.  More precisely,  for positive integers $N$, $k$, and a Dirichlet character $\chi$ mod $N$,  define the Siegel Eisenstein series
\begin{align}\label{eisensteinserieseq}
E_{k,N}^{\chi}(Z,s; 1)=\sum_{\gamma = \mat{A}{B}{C}{D} \in (P_{4n}(\Q)\cap \Sp_{4n}(\Z))\bs \Gamma_{0,4n}(N)}\chi^{-1}(\det(A))\det(\Im(\gamma Z))^{s}\,J(\gamma,Z)^{-k}.
\end{align}
Above, we have $s \in \C$ and $Z \in \H_{2n}$, $P_{4n}$ denotes the Siegel parabolic subgroup of $\Sp_{4n}$, and $\Gamma_{0,4n}(N)$ denotes the Siegel congruence subgroup of $\Sp_{4n}(\Z)$ of level $N$, consisting of matrices whose lower left block is congruent to zero modulo $N$. The series \eqref{eisensteinserieseq} converges absolutely and uniformly for $\Re(s)$ sufficiently large, and the function $E_{k,N}^{\chi}(Z,s; 1)$ is defined by analytic continuation outside this region.

More generally, for each element $h \in \Sp_{4n}(\hat{\Z})$, define $E_{k,N}^{\chi}( Z, s; h):=E_{k,N}^{\chi}(Z,s;1)\big|_{k}h.$ Then, for a smooth modular form\footnote{In general, the (scalar-valued) function $F$ is not holomorphic, but it can be obtained by applying suitable differential operators on a  holomorphic (vector-valued) Siegel cusp form of degree $n$.} $F$ of level $N$ and weight $k$ on $\H_n$ associated to a particular choice of archimedean vector inside an automorphic representation $\pi$ of $\GSp_{2n}(\A)$, and for a certain $Q_\tau \in \Sp_{4n}(\hat{\Z})$, the main result of \cite{PSS17} was an identity of the form
\begin{equation}\label{e:formulaold}
 \int\limits_{ \Gamma_{2n}(N) \bs \H_n}E_{k,N}^{\chi}\!\left(\mat{Z_1}{}{}{Z_2}, \frac{s+n-k}2; Q_\tau \right) \bar{F}(Z_1)\,dZ_1 \approx \frac{L(s,\pi \boxtimes \chi)}{L(s+n, \chi)\prod_{j=0}^{n-1}L(2s+2r, \chi^2)} F(Z_2)
\end{equation} where $L(s,\pi \boxtimes \chi)$ is the degree $2n+1$ $L$-function on $\GSp_{2n} \times \GL_1$, the element $\mat{Z_1}{}{}{Z_2}$ of $\H_{2n}$ is obtained from the diagonal embedding of $\H_n \times \H_n$,  and the symbol $\approx$ indicates that the two sides are equal up to some (well-understood) explicit factors.

The Siegel Eisenstein series $E_{k,N}^{\chi}(Z,s; h)$ on $\H_{2n}$ defined above corresponds to a special case of the degenerate Eisenstein series on $\GSp_{4n}(\A)$ in the sense of Langlands and has been studied previously by many authors, including Shimura \cite{shi83, shibook1}, Feit \cite{feit86}, Garrett \cite{gar83, gar2}, B\"ocherer \cite{boch84, boch1985}, and Brown \cite{Brown2007, brown11}. It is a well-known fact, going back to Shimura \cite{shi83}, that if either $k > n+1$, or $k=n+1$ and $\chi^2 \neq 1$, then the Eisenstein series  $E_{k,N}^{\chi}(Z,0; h)$ (i.e., at the special point $s=0$) represents a \emph{holomorphic} Siegel modular form of weight $k$ and degree $2n$. (We note here that this fact follows immediately in the range $k\ge 2n+1$ where the series converges absolutely, but is more delicate for  $n+1 \le k \le 2n$).

The goal of this paper is to better understand some key aspects of this Eisenstein series at certain negative integer values  $s=-m_0$ for which the Eisenstein series becomes a \emph{nearly holomorphic} Siegel modular form. We recall that  a nearly holomorphic Siegel modular form of degree $n$ is a function on $\H_n$ that has the same definition as a holomorphic Siegel modular form of degree $n$, except that we weaken the holomorphy condition to the condition of being a polynomial in the entries of $\Im(Z)^{-1}$ over the ring of holomorphic functions.
If $0 \le m_0 \le \frac{k-n-1}2$ is an integer, then  $E_{k,N}^{\chi}(Z,-m_0; h)$ represents a nearly holomorphic Siegel modular form of weight $k$ and degree $2n$ (except in the special case $\chi^2 = 1$, $m_0 =\frac{k-n-1}2$).

From classical results of Feit and Shimura it follows (see \cite[Prop. 6.8]{PSS17}) that for each $m_0$ as above, and each $h \in \Sp_{4n}(\hat{\Z})$, the Fourier coefficients of $E_{k,N}^{\chi}(Z,-m_0; h)$ are \emph{algebraic numbers} which behave nicely under the action of $\Aut(\C)$. This was an important ingredient in our paper \cite{PSS17} for proving the expected algebraicity result for the special values
%\fbox{\begin{minipage}{80ex}
%Was: critical special values. I think it's ``critical points'' and ``special values''. Changed three occurrences of ``critical $L$-values'' to ``special $L$-values'' in the following.
%\end{minipage}
%}
of $L(s,\pi \boxtimes \chi)$ in the case of $n=2$.
Motivated by the Bloch-Kato conjectures, we would like to prove more refined arithmetic results on these special $L$-values. In order to achieve such results, the following questions become crucial to answer:

\begin{enumerate}

\item \label{q1} Is the nearly holomorphic modular form $E_{k,N}^{\chi}\left(\mat{Z_1}{}{}{Z_2}, -m_0; Q_\tau\right)$  \emph{cuspidal} in both variables $Z_1, Z_2$ when $N>1$?

\item \label{q2} Can one say anything about the primes dividing the denominators of the (algebraic) Fourier coefficients  of  $E_{k,N}^{\chi}( Z, -m_0; h)$; in particular, can we prove that these coefficients are \emph{$p$-integral} outside a finite specified set of primes $p$?
\end{enumerate}

A positive answer to the former question will allow us to write $E_{k,N}^{\chi}\left(\mat{Z_1}{}{}{Z_2}, -m_0; Q_\tau\right)$ as an explicit bilinear sum over nearly holomorphic Siegel \emph{cusp} forms that are Hecke eigenforms. Combining this with a positive answer to the latter question will lead to new results on congruences between Hecke eigenvalues of Siegel cusp forms, which will open up many avenues for future exploration. We will discuss some of these potential applications at the end of this introduction.

 Previous work on the two questions posed above has been largely restricted to the holomorphic case  $m_0=0$. For $m_0=0$, Question \ref{q1} was answered affirmatively by Garrett \cite{gar2}, which allowed him to deduce important results on the arithmeticity of ratios of Petersson inner products associated to Siegel cusp forms.  In the direction of Question \ref{q2}, a product formula for the Fourier coefficients of the Eisenstein series was written down by Shimura \cite{shibook1}, which was subsequently used by Brown \cite[Theorem 4.4]{Brown2007} for the value $m_0 = \frac{2n+1}{2}-k$ (which is essentially equivalent to $m_0=0$ via the functional equation of the Eisenstein series)  to give a set of primes for which Question \ref{q2} has an affirmative answer (however, see Remark \ref{rem:brown}).
 Other results in the direction of Question \ref{q2} for $m_0=0$,  i.e., giving $p$-integrality results for holomorphic Siegel Eisenstein series, include the works of B\"ocherer \cite{boch84}, B\"ocherer--Schmidt \cite{bocherer-schmidt}, and  B\"ocherer--Dummigan--Schulze-Pillot \cite{BDSP}.
% \ameya{This is in response to Referee 2: I tried to look through these references. The BDS paper just looks at square-free level and no character. The BS paper does a lot but I was not able to figure out all of it. It might be a good idea to describe more explicitly what is done in the holomorphic case, and then explain why we need to still work so hard for the holomorphic case.} \abhishek{I tried to look at the BS paper too but its a bit of a mess really with all the notations and assumptions. Howwver, as far as I can tell, they don't do things in our generality in the holomorohic case. I suggest avoiding talking about this paper in detail. In any case, Referee 2 does not ask about relation of our works to these papers. And on multiple occasions Bocherer has been present on talks we have given and has not complained that he has done it.}

So far, there appears to have been relatively little work on the above two questions in the rest of the \emph{nearly holomorphic} range, i.e., when $0 < m_0 \le \frac{k-n-1}2 $, except in certain full level cases. To put the importance of the nearly holomorphic range in context, we remark that the holomorphic case $m_0=0$ corresponds (via the pullback formula) to at most one special $L$-value for the standard $L$-function. In contrast, the entire nearly holomorphic range for $m_0$ allows us to access \emph{all} the special $L$-values using \eqref{e:formulaold}.

In this paper, we prove the following theorem, which gives an affirmative answer to both questions posed above in a general setup.

\begin{theorem}\label{main-Eis-arith-thm}
 Let $N$ be a positive integer. Let $\chi=\otimes\chi_p$ be a finite order character of $\Q^\times\backslash\A^\times$ whose conductor divides $N$. Let $k$ be a positive integer such that $\chi_\infty = {\rm sgn}^k$. For $h \in \Sp_{4n}(\hat{\Z})$ and $m_0 \in \Z$, let
 $$
  E(Z;h) := \pi^{n+n^2-(2n+1)k+(2n+2)m_0}\Lambda^N\left(\frac{k-2m_0}2\right) E_{k,N}^{\chi}(Z,-m_0; h),
 $$
 where
 \begin{equation}
  \Lambda^N(s) := L^N(2s, \chi) \prod\limits_{i=1}^{n}L^N(4s-2i, \chi^2)
 \end{equation}
 and the notation $L^N$ in the $L$-functions for the Dirichlet characters means that we exclude the local factors at primes dividing $N$ or $\infty$. Then the following hold.
\begin{enumerate}
\item Suppose that $0 \le m_0 \le \frac{k-2n-2}{2}$ is an integer and that $N>1$. Then the function
 $
 E\left(\mat{Z_1}{}{}{Z_2};Q_\tau\right)
 $
 is cuspidal in each variable $Z_1$, $Z_2$. Here, $\tau\in\hat\Z^\times$, and $Q_\tau$ is defined in \eqref{Qtaueq}.

\item Suppose that $0 \le m_0 \le \frac{k-n-1}{2}$ is an integer.
  If $m_0 = \frac{k-n-1}{2}$, assume further that $\chi^2 \neq 1$. Let $p \nmid 2N$ be a prime such that $p \ge 2k$. Write out the usual Fourier expansion
$$E(Z; h) =  \sum_{\substack{S \in \frac1N M_{2n}^{\sym}(\Z) \\ S\ge 0}}a_E\left((2 \pi Y)^{-1} ; S;h \right)e^{2\pi i\,{\rm tr}(SZ)},$$  where $a_E(X; S;h)$ is a polynomial in the entries of the matrix $X$. Then, for any choice of isomorphism $\overline{\Q}_p \simeq \C$, the coefficients of the polynomial $a_E( - ; S;h)$
 lie in  the ring of integers of $\overline{\Q}_p$.
 \end{enumerate}
 \end{theorem}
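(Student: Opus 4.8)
The theorem splits into two essentially independent assertions, and I would treat them in turn.

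\emph{The $p$-integrality statement (2).} The plan is to reduce everything to Shimura's product formula for the Fourier coefficients of Siegel Eisenstein series. First I would dispose of the holomorphic case $m_0=0$, which is the content of Section~\ref{s:brownproof}: there $E(Z;h)$ is, up to the explicit constant, a holomorphic Siegel Eisenstein series of weight $k$ and degree $2n$, and Shimura's formula (see \cite{shibook1}) writes its $S$-th Fourier coefficient as a product of local Siegel series over all finite places, times local and global $L$-factors at integer arguments $<2k$, times a power of $\pi$. The whole point of the normalizing factor $\pi^{n+n^2-(2n+1)k+(2n+2)m_0}\Lambda^N\!\left(\tfrac{k-2m_0}2\right)$ is that at $m_0=0$ it cancels the $L$-value denominator and the transcendental ($\pi$-power) part, leaving $a_E(-;S;h)$ equal to an explicit rational constant times a product of local densities and local $L$-factors. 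For a prime $p\nmid 2N$ with $p\ge 2k$ one then checks the ingredients: the local Siegel series are all $p$-integral (indeed algebraic integers away from $p$); the local and global $L$-factors appearing are $p$-integral via the functional equation together with the fact that the relevant generalized Bernoulli numbers $B_{j,\psi}/j$ (with $\psi$ a suitable Dirichlet character and $0<j<2k$) are $p$-integral by the Carlitz / von Staudt--Clausen congruences, which apply because $p-1\nmid j$ is forced by $p\ge 2k$ and $p\nmid 2N$; and the explicit rational constant is a $p$-unit for the same reason. This is exactly what pins down the hypothesis $p\ge 2k$, and it is here that the comparison with \cite{Brown2007} alluded to in Remark~\ref{rem:brown} enters.

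For general $m_0$ in the nearly holomorphic range I would run the same argument with Shimura's formula for the Fourier coefficients of the \emph{nearly holomorphic} Eisenstein series $E_{k,N}^\chi(Z,-m_0;h)$. Its non-archimedean part is as before (with the local series now evaluated at a shifted argument), and its archimedean factor at $s=-m_0$ degenerates to a polynomial in the entries of $(2\pi Y)^{-1}$ and of $S$ --- this polynomial is precisely $a_E(-;S;h)$. Once more $\pi^{n+n^2-(2n+1)k+(2n+2)m_0}\Lambda^N\!\left(\tfrac{k-2m_0}2\right)$ clears the $L$-value denominator and the transcendental part, and one checks that for $p\ge 2k$ the coefficients of the archimedean polynomial are $p$-integral: they are ratios of products of integers bounded in absolute value by $2k$ (Pochhammer symbols in $k$, $n$, $m_0$ of length at most $m_0$). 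Alternatively, one can realize $E_{k,N}^\chi(\cdot,-m_0;h)$ as the image of the holomorphic Eisenstein series of weight $k-2m_0$ at $s=0$ under an iterated Maass--Shimura operator --- which acts on $q$-expansions through operators with denominators of the same bounded type --- and so transfer $p$-integrality from the holomorphic case. The main obstacle I anticipate here is essentially bookkeeping: matching the precise power of $\pi$ and the precise factor $\Lambda^N\!\left(\tfrac{k-2m_0}2\right)$, term by term, against the $\pi$'s and $L$-values produced by Shimura's formula so that \emph{all} denominators (not merely those dividing some bounded quantity) are cleared, and verifying that the archimedean polynomial hides no small-prime denominator.

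\emph{The cuspidality statement (1).} Since cuspidality is invariant under $\Gamma$-conjugation and the situation is symmetric in $Z_1$ and $Z_2$, it suffices to show that the Siegel operator $\Phi$ in the variable $Z_1$ annihilates $E\left(\mat{Z_1}{}{}{Z_2};Q_\tau\right)$ at every cusp. Geometrically, pushing the last imaginary coordinate of $Z_1$ to $i\infty$ and rearranging blocks identifies $\Phi_{Z_1}$ of the pullback with the pullback to $\H_{n-1}\times\H_n\hookrightarrow\H_{2n-1}$ of the constant term of the degenerate Siegel Eisenstein series $E_{k,N}^\chi(\cdot,-m_0;Q_\tau)$ on $\H_{2n}$ along the maximal (``Klingen-type'') parabolic of $\Sp_{4n}$. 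Adelically this constant term is a finite sum over double cosets, each summand being a product of a lower-rank Eisenstein series with a local intertwining/orbit integral. The essential use of $N>1$ is that the element $Q_\tau$ of \eqref{Qtaueq} forces the $p$-component of the inducing section, for $p\mid N$, to be a ``new''-type section supported on the big Bruhat cell; this annihilates every double-coset summand whose representative avoids the big cell --- in particular the ``constant-function'' summand responsible for non-cuspidality at full level. Each summand surviving this step carries an archimedean factor (a product of $\Gamma$-factors attached to the nearly holomorphic archimedean vector) which vanishes at $s=-m_0$ throughout $0\le m_0\le\tfrac{k-2n-2}2$. Hence $\Phi_{Z_1}E=0$, and the same argument in $Z_2$ finishes the proof.

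The step I expect to be hardest is this last archimedean vanishing: identifying exactly which $\Gamma$-factors occur in the constant term of the \emph{nearly holomorphic} Siegel Eisenstein series and checking that they vanish for every $m_0$ in $\left[0,\tfrac{k-2n-2}2\right]$. It is this archimedean input that forces the bound $\tfrac{k-2n-2}2$ rather than the full near-holomorphy bound $\tfrac{k-n-1}2$; and because the pullback is genuinely non-cuspidal when $N=1$, any correct argument must, as above, use $N>1$ essentially.
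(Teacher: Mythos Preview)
Your plan for Part~2 is close to the paper's in spirit but has a real gap: you assume Shimura's product formula gives you the Fourier coefficients of $E_{k,N}^\chi(Z,0;h)$ for \emph{every} $h\in\Sp_{4n}(\hat\Z)$. It does not. The explicit formula (and hence the local Siegel series analysis) is available only at one specific cusp, namely $h=\iota=J_{2n}$, corresponding to Shimura's $E^\ast$. The paper handles this in two stages: first Proposition~\ref{integrality-eis-series-thm} proves the $m_0=0$ case for $h=\iota$ via the product formula, Kitaoka's explicit local series at $p$ (Proposition~\ref{p:key} --- a non-trivial valuation estimate, not merely ``local Siegel series are $p$-integral''), and Bernoulli-number integrality; then Proposition~\ref{integrality-eis-series-thm-new} transports this to arbitrary $h$ by the $q$-expansion principle of Chai--Faltings (Proposition~\ref{qexpansiontheorem}). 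Without that geometric input you cannot pass from one cusp to all cusps, and your sketch has no substitute for it. After that, the paper does exactly what you list as your ``alternative'': apply the iterated Maass operator $\Delta_{k-2m_0}^{m_0}$ (using Panchishkin's Fourier-expansion formula, Proposition~\ref{step3prop}, and Shimura's identity \eqref{step4eq1}) to move from $m_0=0$ to general $m_0$; the condition $p\ge 2k$ is what makes the scalar $d$ in \eqref{step4eq2} a $p$-unit.

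For Part~1 your strategy diverges from the paper and, as stated, does not go through. The paper does \emph{not} compute constant terms of the degenerate Eisenstein series along parabolics of $\Sp_{4n}$ and then look for $\Gamma$-factor zeros. Instead it unwinds the pullback via the double-coset decomposition \eqref{PSS17Prop21}; the assumption $N>1$ (Lemma~\ref{support-f-ram}) kills all orbits except $Q_n$, which is the step you correctly anticipate. From there, however, the argument is a direct archimedean computation: one reduces cuspidality (via Lemma~\ref{cuspidalitycriterion}) to the vanishing of the integral \eqref{Int0-1} for degenerate $S$, and shows this by an elementary integration-by-parts identity $I(\ell,m)=\frac{m}{\ell-m-1}I(\ell-2,m-1)$. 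In particular, the bound $m_0\le\frac{k-2n-2}{2}$ is \emph{not} forced by any $\Gamma$-factor vanishing; it is needed so that the series \eqref{Eis-ser-defn} converges absolutely at $s=\frac{k-2m_0}{2n+1}-\frac12$, which is what licenses the unwinding \eqref{Eis-n} in the first place. Your proposed ``each surviving summand carries an archimedean $\Gamma$-factor which vanishes'' is unsubstantiated here and is not the mechanism the paper uses.
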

%\abhishek{Added next remark for referee 2}
\begin{remark}In the case $m_0 = \frac{k-n-1}{2}$ (which only occurs when $k \equiv n+1 \pmod{2}$), Theorem \ref{main-Eis-arith-thm} cannot handle the case $\chi^2 = 1$. This is due to the fact that the normalization of the Eisenstein series corresponding to this point involves the factor $L(1, \chi^2)$ which has a pole when $\chi^2=1$. (This can be seen directly from \eqref{b(h)-formula} in the case $m_0=0, k=n+1$  by looking at the Fourier coefficient $b(h)$ when  $\rho_h=\chi$). Consequently the required arithmetic results for the Eisenstein series are unavailable in this case.
The restriction $p \ge 2k$ in Theorem \ref{main-Eis-arith-thm} comes from the denominator present for the action of a certain differential operator; see \eqref{step4eq2}.

\end{remark}
We now discuss further the significance of this result. Similar results in the case $m_0=0$ have been previously used by various authors to show that primes dividing the denominators or numerators of certain $L$-values are in fact \emph{congruence primes} for certain lifted Siegel modular forms. In this context, we note the papers of Brown \cite{Brown2007, brown11}, Agarwal--Brown  \cite{brown-agarwal},  Katsurada \cite{katsu08, katsu15}, B\"ocherer--Dummigan--Schulze-Pillot \cite{BDSP} and Agarwal--Klosin \cite{AK}, all of which obtain congruence primes for lifted Siegel cusp forms to give evidence towards the \emph{Bloch-Kato conjecture} for appropriate $L$-functions. More generally, the method of using congruence primes for lifts was inaugurated by Ribet's famous proof of the converse of Herbrand's theorem. Following Wiles, Skinner-Urban and others, this idea has proved tremendously influential (e.g.,  to the main conjectures of Iwasawa theory). In a slightly different direction, there is recent work of  Tobias Berger and Krzysztof Klosin \cite{berger-klosin}  who have developed a new approach to proving non-trivial cases of the \emph{paramodular conjecture} using such congruences for paramodular Saito-Kurokawa lifts. We are  currently working on a  project to obtain a general result for congruence primes for $\GSp_4$, extending the works mentioned above. Theorem \ref{main-Eis-arith-thm} will be a key piece of this future generalization.

We now explain briefly the steps in our proof of Theorem \ref{main-Eis-arith-thm}. The basic strategy for proving part 1 of the theorem, i.e., the cuspidality of the pullback, goes back to Garrett \cite{gar2}. The key point here is that we assume $N>1$, and translate the Eisenstein series by $Q_\tau$, which  ensures that the pullback of the Eisenstein series unwinds to a sum over exactly \emph{one} double-coset representative. (This step also requires us to assume that the series defining $E_{k,N}^{\chi}(Z,-m_0; Q_\tau)$ is absolutely convergent, which is why we need to assume $m_0 \le \frac{k-2n-2}{2}.$) To deduce cuspidality from here, we show that the $S$'th Fourier coefficient of each term in the resulting sum vanishes unless $S$ is positive definite. For this, we reduce the problem to the vanishing of a real integral, which we prove by a direct computation; see Section \ref{s:cuspidal} for details. This last step of ours is quite different from Garrett's proof, who was able to directly use a computation of Siegel to show the vanishing of the integral; this is no longer available in our setup.

Our proof of part 2 of Theorem \ref{main-Eis-arith-thm}, i.e., the $p$-integrality of the Fourier coefficients, proceeds via several steps.
%\abhishek{Changed the next sentence for referee 2}
The first step, which is the most involved part of the argument, is done in Section \ref{s:initial} and treats the case $m_0=0$, $h = \mat{0}{I_{2n}}{-I_{2n}}{0}$. For this, we combine Shimura's work which expresses a general Fourier coefficient as an explicit Euler product of local Siegel series, explicit formulas \cite{boch84, kitaoka84} for these local Siegel series at bad primes, and classical facts on the $p$-integrality of Bernoulli numbers. The second step, carried out in Section \ref{s:qexp}, extends this to general $h$ via the powerful $q$-expansion principle (due to Chai-Faltings \cite{ChaiFaltings1990} in this setting). As a final step (carried out in Section \ref{s:final}), we treat the case of general $m_0$, using the Maass weight raising differential operator, and results due to Panchishkin \cite{Pan2005} and Shimura \cite{shibook2}. We remark that these steps do not require us to assume that the series defining $E_{k,N}^{\chi}(Z,-m_0; h)$ is absolutely convergent, and therefore, for part~2 of Theorem \ref{main-Eis-arith-thm}, we can let $m_0$ vary over the full range of integers that produces a nearly holomorphic modular form.
\subsection*{Acknowledgements}A.S. acknowledges the support of the Leverhulme Trust Research Project Grant RPG-2018-401. We also thank the London Mathematical Society for a Scheme 4 `Research in Pairs' grant (Ref: 41850) which supported a visit of A.P. to A.S. during which part of this paper was written. We would like to thank the referees for their comments and suggestions.

\subsection*{Notations}
For a positive integer $n$, let $G_{2n}$ be the algebraic group $\GSp_{2n}$, defined for any commutative ring $R$ by
$$
 G_{2n}(R)=\{g\in\GL_{2n}(R)\,:\,^tgJ_ng=\mu_n(g)J_n,\:\mu_n(g)\in R^\times\},\qquad J_n=\mat{}{I_n}{-I_n}{},
$$
where $I_n$ is the $n\times n$ identity matrix. The symplectic group $\Sp_{2n}$ consists of those elements $g\in G_{2n}$ for which the multiplier $\mu_n(g)$ is $1$. Let $\H_n$ be the Siegel upper half space of degree $n$, consisting of all complex, symmetric $n\times n$ matrices $X+iY$ with $X,Y$ real and $Y$ positive definite. The group $\Sp_{2n}(\R)$ acts transitively on $\H_n$ in the usual way (see Chapter I of \cite{klingen}). For each $n$, we let $I$ denote the element $iI_n \in \H_n$, and for $g=\mat{A}{B}{C}{D}\in\Sp_{2n}(\R)$, $Z \in \H_n$, we let $J(g, Z) =  \det(CZ+D)$.

For each discrete subgroup $\Gamma$ of $\Sp_{2r}(\Q)$, let $N_k(\Gamma)$ (resp.\ $N_k(\Gamma)^\circ$)  be the space of nearly holomorphic Siegel modular forms (resp.\ cusp forms) of weight $k$ for $\Gamma$. Similarly, we let $M_k(\Gamma)$ (resp. $S_k(\Gamma)$)  be the space of holomorphic Siegel modular forms (resp.\ cusp forms) of weight $k$ for $\Gamma$. We let $\Gamma_{2r}(N)$ denote the principal congruence subgroup of level $N$.

Given a subring $R$ of $\C$, we define the $R$-module $N_k(\Gamma, R)$ to consist of all the elements in $N_k(\Gamma)$ whose Fourier coefficients lie in $R$. Concretely, an element $F \in  N_k(\Gamma)$ has a Fourier expansion of the form
\begin{equation}
 F(Z) = \sum_{\substack{S \in M_{2r}^{\sym}(\Q) \\ S \ge 0}}P\left((2 \pi Y)^{-1} ; S\right)e^{2\pi i\,{\rm tr}(SZ)},\qquad Z=X+iY.
\end{equation}
Then $F$ belongs to $N_k(\Gamma, R)$ if and only if the coefficients of all the polynomials $P(X, S)$ lie in $R$. We define $M_k(\Gamma, R) = N_k(\Gamma, R) \cap M_k(\Gamma)$.

We fix an embedding of $\overline{\Q}$ in $\C$. Given a prime $p$, we let $\mathcal{O}_p \subset \overline{\Q}$ denote the subring  consisting of elements whose $\lambda$-adic valuation is non-negative for any prime $\lambda$ lying above $p$. Equivalently, an element $x \in \overline{\Q} \subset \C$ belongs to $\mathcal{O}_p$ if $x$ lies in the ring of integers of  $\overline{\Q_p}$ for every choice of isomorphism $\overline{\Q_p} \simeq \C$. We say that $x \in \overline{\Q}$ is $p$-integral if $x$ belongs to $\mathcal{O}_p$.
 \section{Definition of the Eisenstein series}
\subsection{Preliminaries}\label{basicnotdefsec}
Let $\A$ denote the ring of adeles of $\Q$. Let $P_{2n}$ be the Siegel parabolic subgroup of $G_{2n}$, consisting of matrices whose lower left $n\times n$-block is zero. Let $N_{2n}$ denote the unipotent radical of $P_{2n}$. Let $\delta_{P_{2n}}$ be the modulus character of $P_{2n}(\A)$. It is given by
\begin{equation}\label{modulus-char}
 \delta_{P_{2n}}(\mat{A}{X}{}{v\,^t\!A^{-1}}) = |v^{-\frac{n(n+1)}2}\det(A)^{n+1}|, \qquad \text{ where } A \in \GL_n(\A),\: v \in \GL_1(\A),
\end{equation}
and $|\cdot|$ denotes the global absolute value on $\A$, normalized in the standard way, so that $|x|=1$ for $x\in\Q^\times$.

Fix the following embedding of $H_{2a,2b} := \{(g, g') \in G_{2a} \times G_{2b}: \mu_a(g) = \mu_b(g') \}$ into $G_{2a+2b}$:
\begin{equation}\label{embedding-defn}
H_{2a,2b} \ni (\mat{A_1}{B_1}{C_1}{D_1}, \mat{A_2}{B_2}{C_2}{D_2}) \longmapsto \left[\begin{smallmatrix}A_1&&-B_1\\&A_2&&B_2\\-C_1&&D_1\\&C_2&&D_2\end{smallmatrix}\right] \in \GSp_{2a+2b}.
\end{equation}
We will also let $H_{2a,2b}$ denote its image in $G_{2a+2b}$, and think of $(g,g')\in H_{2a,2b}$ as an element of $G_{2a+2b}$.

For $p=\mat{A}{*}{}{v\,^t\!A^{-1}}$ with $v\in\GL_1(\A)$ and $A\in\GL_{2n}(\A)$, note that $\delta_{P_{4n}}(p)=|v^{-n}\det(A)|^{2n+1}$. Let $\chi$ be a finite order character of $\Q^\times \backslash \A^\times$. We define a character on $P_{4n}(\A)$, also denoted by $\chi$, by $\chi(p) = \chi(v^{-n}\det(A))$.

% Let $d:P_{4n}\to\GL_1$ be the homomorphism defined by $d(p)=v^{-n}\det(A)$ for $p=\mat{A}{*}{}{v\,^t\!A^{-1}}$ with $v\in\GL_1$ and $A\in\GL_{2n}$. Note that $\delta_{P_{4n}}(p)=|d(p)|^{2n+1}$ for $p\in P_{4n}(\A)$. Let $\chi$ be a character of $\Q^\times \backslash \A^\times$. We define a character on $P_{4n}(\A)$, also denoted by $\chi$, by $\chi(p) = \chi(d(p))$.
For a complex number $s$, let
\begin{equation}\label{ind-repn}
 I(\chi,s) = {\rm Ind}^{G_{4n}(\A)}_{P_{4n}(\A)} ( \chi \delta_{P_{4n}}^s).
\end{equation}
Thus, any $f(\,\cdot\,, s) \in I(\chi,s)$ is a smooth complex-valued function satisfying
\begin{equation}\label{ind-repn-fctn}
 f(pg, s)  = \chi(p) \delta_{P_{4n}}(p)^{s + \frac12} f(g,s)
\end{equation}
for all $p \in P_{4n}(\A)$ and $g \in G_{4n}(\A)$. Note that these functions are invariant under the center of $G_{4n}(\A)$. Let $I(\chi_v,s)$  be the analogously defined local representation at a place $v$ of $\Q$.
We have $I(\chi,s)\cong\otimes I(\chi_v,s)$ in a natural way.
%We will mostly use this observation in the following form. Let $f_v\in I_v(\chi_v,s)$ and $K$ a maximal compact subgroup of $\Sp_{2n}(\Q_p)$. Then
%\begin{equation}\label{Qninveq2}
% f_v(Q_n \cdot (k_1hk_2, 1), s)=f_v(Q_n \cdot (h, 1)(k_2,k_1^{-1}), s)\qquad\text{for }k_1,k_2\in K
%\end{equation}
%and all $h\in\Sp_{2n}(\Q_p)$.

Let $f(\cdot,s) \in I(\chi, s)$ be a section whose restriction to the standard maximal compact subgroup  of $G_{4n}(\A)$ is independent of $s$. Consider the Eisenstein series on $G_{4n}(\A)$ which, for $\Re(s)>\frac12$, is given by the absolutely convergent series
\begin{equation}\label{Eis-ser-defn}
 E(g,s,f) = \sum\limits_{\gamma \in P_{4n}(\Q) \backslash G_{4n}(\Q)} f(\gamma g, s),
\end{equation}
and defined outside this region by analytic continuation \cite[Appendix II]{langlands}.
\subsection{The choice of section}\label{section-choice}
For the rest of this paper, let $N$ be an integer and let $S$ be the set of primes dividing $N$. Write $N = \prod_{p \in S} p^{m_p}$, so that  $m_p>0$ for all $p \in S$. Let $\chi=\otimes\chi_p$ be a finite order character of $\Q^\times \backslash \A^\times$ whose conductor divides $N$. In other words, $S$ contains all primes $p$ where $\chi_p$ is ramified, and for each $p \in S$,  $\chi_p |_{(1+p^{m_p}\Z_p)} = 1$. Let $k$ be a positive integer  such that $\chi_\infty = {\rm sgn}^k$.

For $0 \leq r \leq n$, define a matrix of size $4n\times4n$ by
\begin{equation}\label{Qrdefeq}
 Q_r = \left[\begin{smallmatrix}I_n &0&0&0 \\ 0& I'_{n-r}&0&\widetilde{I_r}\\ 0&0&I_n&\widetilde{I_r}\\ \widetilde{I_r}&-\widetilde{I_r}&0&I'_{n-r} \end{smallmatrix}\right],
\end{equation}
where $\widetilde{I_r}= \mat{0_{n-r}}{0}{0}{I_r}$ and $I'_{n-r}=I_n-\tilde I_r=\mat{I_{n-r}}{0}{0}{0}$. Given $f\in I(\chi,s)$, it follows from iii) of Lemma 2.2 of \cite{PSS17} that, for all $h_1, h_2 \in G_{2n}(\A)$ with the same multiplier,
\begin{equation}\label{Qninveq1}
 f(Q_n \cdot (gh_1, gh_2), s)=f(Q_n \cdot (h_1, h_2), s)\qquad\text{for }g\in G_{2n}(\A).
\end{equation}
Choose local sections  as follows.
\begin{enumerate}
 \item Let $p \notin S$ be a finite prime. We choose $f_p\in I(\chi_p,s)$ to be the unique normalized unramified vector, i.e., $f_p:\:G_{4n}(\Q_p) \times \C\rightarrow\C$ is given by
  \begin{equation}\label{sphericalsectioneq}
   f_p(Pk, s)=\chi_p(P)\delta_{P_{4n}}(P)^{s + \frac12}
  \end{equation}
  for $P \in P_{4n}(\Q_p)$ and $k\in G_{4n}(\Z_p)$.
 \item Let $p \in S$. Define $K_p(m_p) := \{ g \in \Sp_{4n}(\Z_p) : \ g \equiv I_{4n}\bmod p^{m_p}\}$. Let $f_p(g,s)$ be the unique function on $G_{4n}(\Q_p) \times \C$ such that
 \begin{equation}\label{badplacecond1}
  f_p(Pk, s)  = \chi_p(P) \delta_{P_{4n}}(P)^{s + \frac12}\quad\text{ for all }P\in P_{4n}(\Q_p)\text{ and }k\in K_p(m_p)
 \end{equation}
 and
 \begin{equation}\label{badplacecond2}
  f_p(g, s) = 0\quad\text{ if }g \notin  P_{4n}(\Q_p)K_p(m_p).
 \end{equation}
%   \begin{enumerate}
%    \item $f_p(Pk, s)  = \chi_p(P) \delta_{P_{4n}}(P)^{s + \frac12} $ for all $P\in P_{4n}(\Q_p)$ and $k\in K_p(m_p)$.
%    \item $f_p(g, s) = 0$ if $g \notin  P_{4n}(\Q_p)K_p(m_p)$.
%   \end{enumerate}
  It is easy to see that $f_p$ is well-defined. Evidently, $f_p\in I(\chi_p,s)$.
 \item Let $v = \infty$. Let $K^{(2n)} \simeq U(2n)$  be the standard maximal compact subgroup of $\Sp_{4n}(\R)$.
%\fbox{Was: $G_{4n}(\R)$. But $K^{(2n)}$ is not maximal compact in $G_{4n}(\R)$.}

 Let
  \begin{equation}\label{degprincserhollemmaeq3c}
   f_\infty(\mat{A}{*}{}{u\,^t\!A^{-1}}g,s)= \sgn^k(\det(A))\,\sgn^{nk}(u)|u^{-n}\det(A)|^{(2n+1)(s+\frac12)}\,J(g,I)^{-k}
  \end{equation}
  for  $A\in\GL_{2n}(\R)$, $u\in\R^\times$ and $g\in K^{(2n)}$.
\end{enumerate}
Define
\begin{equation}\label{globalsectioneq}
 f(g, s) = \prod_v f_v(g_v, s).
\end{equation}
Let $Q$ denote the element $Q_n$ embedded diagonally in $\prod_{p<\infty}\Sp_{4n}(\Z_p)$, and for any $\tau \in \hat{\Z}^\times = \prod_{p<\infty}\Z_p^\times$, let
\begin{equation}\label{Qtaueq}
 Q_\tau = \mat{\tau I_{2n}}{}{}{I_{2n}}Q \mat{\tau^{-1} I_{2n}}{}{}{I_{2n}}.
\end{equation}
For $f$ as in \eqref{globalsectioneq} and any $h \in \Sp_{4n}(\hat{\Z})=\prod_{p<\infty}\Sp_{4n}(\Z_p)$, define $f^{(h)}(g,s) = f(gh^{-1}, s)$. We can now define the Eisenstein series $E(g, s, f^{(Q_\tau)})$ to be the Eisenstein series defined in (\ref{Eis-ser-defn}) with $f$, $Q_\tau$  as above.
\subsection{The Eisenstein series \texorpdfstring{$E_{k,N}^{\chi}(Z,s; h)$}{}}\label{s:classicaldef}
For $Z \in \H_{2n}$ and $h \in \Sp_{4n}(\hat{\Z})$, we define
\begin{equation}\label{classical-Eis-ser-1}
 E_{k,N}^{\chi}(Z,s; h):=J(g, I)^{k} E\Big(g,\frac {k+2s}{2n+1}-\frac 12, f^{(h)}\Big),
\end{equation}
where $g$ is any element of $\Sp_{4n}(\R)$ with $g(I)=Z$. (For a proof that this definition coincides with the earlier one given by \eqref{eisensteinserieseq}, see Section 6.2 of \cite{PSS17}.) We will be interested in the function $E_{k,N}^{\chi}(Z,-m_0; h)$, where $m_0 \ge 0$ is a non-negative integer. The following result is a consequence of \cite[Prop. 6.6, 6.8]{PSS17}.
\begin{proposition}\label{propnearholo}
 Suppose that $k\ge n+1$ and let $h \in \prod_{p<\infty}\Sp_{4n}(\Z_p)$. If $k=n+1$, assume further that $\chi^2 \neq 1$. Let $0 \le m_0 \le \frac{k}2-\frac{n+1}2$ be an integer, and exclude the case $m_0 = \frac{k}2-\frac{n+1}2$, $\chi^2 = 1$. Then
 $$
  \pi^{-2 m_0 n}E_{k,N}^{\chi}(Z, -m_0;h) \in N_k(\Gamma_{4n}(N), \Q_{\rm{ab}}),
 $$
 where $\Q_{\rm{ab}}$ is the maximal abelian extension of $\Q$.
\end{proposition}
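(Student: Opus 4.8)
\medskip
\noindent\emph{Proof strategy.}
The plan is to deduce the three assertions --- automorphy of level $\Gamma_{4n}(N)$, near-holomorphy at $s=-m_0$, and membership of the Fourier coefficients in $\Q_{\rm ab}$ after dividing by $\pi^{2m_0 n}$ --- from the adelic description of the Eisenstein series in Sections~\ref{basicnotdefsec}--\ref{section-choice}, together with Shimura's analysis of its Fourier expansion. First I would establish the transformation law. For $\Re(s)$ large the defining series \eqref{Eis-ser-defn} converges and $E(g,s,f^{(h)})$ is left $\Sp_{4n}(\Q)$-invariant by construction; using the description $E_{k,N}^{\chi}(Z,s;h)=J(g,I)^k E(g,s';f^{(h)})$ of \eqref{classical-Eis-ser-1}, with $g\in\Sp_{4n}(\R)$, $g(I)=Z$, and the cocycle identity $J(\gamma g,I)=J(\gamma,Z)\,J(g,I)$, one obtains $E_{k,N}^{\chi}(\gamma Z,s;h)=J(\gamma,Z)^k E_{k,N}^{\chi}(Z,s;h)$ for every $\gamma\in\Sp_{4n}(\Q)$ such that $h\gamma^{-1}h^{-1}$ lies in the open compact subgroup $\prod_{p\in S}K_p(m_p)\times\prod_{p\notin S}\Sp_{4n}(\Z_p)$ under which the finite part of $f$ is right invariant. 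Since $h\in\Sp_{4n}(\hat{\Z})$, conjugation by $h$ fixes each $\Sp_{4n}(\Z_p)$ and carries $I_{4n}+p^{m_p}M_{4n}(\Z_p)$ into itself, so this holds for all $\gamma\in\Gamma_{4n}(N)$; analytic continuation (\cite[Appendix II]{langlands}) then transports the law to $s=-m_0$, where, in the stated range and outside the excluded case, the continued Eisenstein series is regular --- another classical fact going back to Shimura and Feit. This already exhibits $E_{k,N}^{\chi}(\,\cdot\,,-m_0;h)$ as a real-analytic modular form of weight $k$ and level $\Gamma_{4n}(N)$ on $\H_{2n}$; the substantive point is to upgrade this to a \emph{nearly holomorphic} form with controlled Fourier coefficients.

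For that I would pass to the explicit Fourier expansion, in which (say, for $\det(2S)\neq0$) the $S$-th coefficient factors as a product of local pieces: a finite Euler product of Siegel series $b_p(S,\,\cdot\,)$ over the primes dividing $2N\det(2S)$, an elementary rational ``unramified tail'' in $\det(2S)$ and the $L$-factors of $\chi$ and $\chi^2$, Gauss sums and values $\chi_p(\,\cdot\,)$ at the bad primes, and Shimura's archimedean confluent hypergeometric factor $\xi(Y,S;s)$. The finite part is rational in $s$ with denominators bounded in terms of $N$, and the Gauss sums and values of $\chi$ lie in $\Q(\zeta_N)\subset\Q_{\rm ab}$; the crucial input is Shimura's evaluation of $\xi$ at the points $s=-m_0$ for $0\le m_0\le\tfrac{k-n-1}{2}$ (excluding $\chi^2=1$, $m_0=\tfrac{k-n-1}{2}$), which shows that $\xi(Y,S;-m_0)$ vanishes unless $S\ge0$, is a polynomial in the entries of $(2\pi Y)^{-1}$ of degree bounded in terms of $m_0$ and $\mathrm{rank}(S)$, and equals $\pi^{2m_0 n}$ times a rational number times this polynomial (the $\pi$-powers coming from $\xi$ and from the critical Dirichlet $L$-values in the normalization cancelling down to exactly $\pi^{2m_0 n}$). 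Putting the pieces together yields both near-holomorphy and $\pi^{-2m_0 n}E_{k,N}^{\chi}(Z,-m_0;h)\in N_k(\Gamma_{4n}(N),\overline{\Q})$; the refinement to $\Q_{\rm ab}$ then follows because $\sigma\in\Aut(\C)$ acts on the finite part only through its action on $\chi$, i.e.\ through $\Gal(\Q_{\rm ab}/\Q)$, while the $\pi^{2m_0 n}$-normalization makes the archimedean factor $\sigma$-fixed. An alternative that bypasses re-deriving Shimura's evaluation of $\xi$ is to express $\pi^{-2m_0 n}E_{k,N}^{\chi}(Z,-m_0;h)$ as an explicit rational multiple of the image, under $m_0$ iterations of the Maass--Shimura weight-raising operator on $\H_{2n}$, of the \emph{holomorphic} Eisenstein series $E_{k-2m_0,N}^{\chi}(Z,0;h)$ --- which is holomorphic because $k-2m_0\ge n+1$ throughout our range, with $k-2m_0=n+1$ forced exactly in the excluded case $\chi^2=1$, $m_0=\tfrac{k-n-1}{2}$, and which lies in $M_{k-2m_0}(\Gamma_{4n}(N),\Q_{\rm ab})$ by the classical work of Shimura \cite{shi83,shibook1} and Feit \cite{feit86} --- and then to invoke that this operator, suitably normalized by the $\pi^{2m_0 n}$, preserves both near-holomorphy and $\Q_{\rm ab}$-rationality.

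I expect the main obstacle to be the archimedean analysis of the second paragraph: showing that the analytically continued section at $s=-m_0$ genuinely lands in the nearly holomorphic part of the degenerate principal series --- equivalently, that $\xi(Y,S;-m_0)$ is a polynomial in $Y^{-1}$ rather than a more general real-analytic function --- and that the accompanying constant is precisely $\pi^{2m_0 n}$ times a rational number, uniformly in $S$. This is exactly where Shimura's theory of confluent hypergeometric functions of matrix argument (\cite{shibook1,shibook2}) is indispensable, and it requires a careful accounting of which $\Gamma$-factors degenerate: the vanishing or pole of such a factor is what forces the exclusion of $\chi^2=1$, $m_0=\tfrac{k-n-1}{2}$, and the same phenomenon is what makes $k-2m_0=n+1$ with $\chi^2=1$ the boundary case of the differential-operator argument. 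By contrast, the automorphy-and-level computation of the first paragraph is routine strong-approximation bookkeeping, and the passage from ``algebraic Fourier coefficients'' to ``coefficients in $\Q_{\rm ab}$'' is standard once the explicit Fourier expansion and the $\Aut(\C)$-action on Gauss sums are in hand.
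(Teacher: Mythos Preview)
Your proposal is correct in substance and indeed goes further than the paper does here: the paper gives no in-text argument for this proposition at all, simply recording it as ``a consequence of \cite[Prop.~6.6, 6.8]{PSS17}''. What you have sketched is essentially the content of those cited propositions (which in turn rest on Shimura's and Feit's results), so your approach is not different from the paper's so much as it is an unpacking of the black box the paper invokes.

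A few remarks on the two routes you describe. Your ``alternative'' via the Maass--Shimura raising operator is in fact the cleaner of the two and is exactly the mechanism the present paper exploits later, in Section~\ref{s:final}: equation \eqref{step4eq1} expresses $E_{k,N}^{\chi}(Z,-m_0;h)$ as a rational multiple of $\Delta_{k-2m_0}^{m_0}$ applied to the holomorphic series $E_{k-2m_0,N}^{\chi}(Z,0;h)$, and Proposition~\ref{step3prop} shows that $\pi^{-2nm_0}\Delta_{k-2m_0}^{m_0}$ preserves rationality of Fourier coefficients. This route makes the exponent $2m_0 n$ of $\pi$ transparent and avoids the delicate bookkeeping of $\Gamma$-factors in $\xi(Y,S;\,\cdot\,)$ that you correctly flag as the hard part of the direct Fourier-expansion approach. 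Your first route, analyzing $\xi$ directly via Shimura's confluent hypergeometric theory, is also valid and is closer to how \cite{PSS17} actually argues; either way the excluded case $m_0=\tfrac{k-n-1}{2}$, $\chi^2=1$ arises for the reason you identify. Your level/automorphy paragraph is fine, with the minor caveat that the relevant right-invariance is of $f^{(h)}$ under $h^{-1}Kh=K$ (normality of $K_p(m_p)$ in $\Sp_{4n}(\Z_p)$), which is what you are using implicitly.
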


%\fbox{
%\begin{minipage}{80ex}
%Just making sure my understanding of Proposition 6.5 of \cite{PSS17} is correct: The stronger statement
% $$
%  \pi^{-2 m_0 n}E_{k,N}^{\chi}(Z, -m_0;h) \in N_k(\Gamma_{4n}(N), \Q_{\rm ab})
% $$
%is true, correct? This is because for $\sigma\in{\rm Gal}(\C/\Q_{\rm ab})$ we have $^\sigma\chi=\chi$ and $\tau=1$.
%
%AS: Yes this is correct. Should we just replace $\overline{\Q}$ by $\Q_{\rm ab}$ in the proposition above?
%
%R: Why not. So done! Also changed ``\cite[Prop. 6.5, 6.7]{PSS17}'' to ``\cite[Prop. 6.6, 6.8]{PSS17}''
%\end{minipage}
%}

We note, however, that the series defining $E_{k,N}^{\chi}(Z,-m_0 ; h)$  converges absolutely  only in the smaller range  $m_0 \le \frac{k}{2} - n -1$.
\section{Cuspidality of the pullback}\label{s:cuspidal}
The results of this section will complete the proof of assertion 1 of Theorem \ref{main-Eis-arith-thm}.
\subsection{Formulation of the main result}

%\fbox{Changed from ``The main result''.}

Let the  Eisenstein series $E(g, s, f^{(Q_\tau)})$ be as defined in the previous section. In this section we will prove the following theorem.

\begin{theorem}\label{Eis-cusp-thm}
Let $0 \leq m_0 \leq k/2-n-1$. Assume that $N>1$. Then the restriction of $E( - , \frac {k-2m_0}{2n+1}-\frac 12, f^{(Q_\tau)})$ to $H_{2n,2n}(\A)$ gives a cuspidal automorphic form on $H_{2n,2n}(\A)$.
\end{theorem}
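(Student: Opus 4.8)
The plan is to exploit the special structure of the section $f^{(Q_\tau)}$, which — because we are in the range $m_0 \le k/2 - n - 1$ where the Eisenstein series converges absolutely — lets us compute the restriction to $H_{2n,2n}(\A)$ as a convergent sum over double cosets $P_{4n}(\Q)\backslash G_{4n}(\Q) / H_{2n,2n}(\Q)$. First I would carry out the standard Garrett double-coset analysis (as in \cite{gar83, gar2, PSS17}): the orbits are indexed by ranks $0 \le r \le n$, and the translation by $Q_\tau$, together with the condition $N>1$ which forces the local sections $f_p$ ($p\in S$) to be supported on $P_{4n}(\Q_p)K_p(m_p)$, kills all orbits except the top one $r=n$ (this is exactly the mechanism already used in \cite{PSS17}, so I would cite that computation rather than redo it). Hence the restriction equals a single, absolutely convergent sum of the shape $\sum_{\gamma} f^{(Q_\tau)}(Q_n\cdot(\gamma_1 h_1,\gamma_2 h_2),s)$, and by \eqref{Qninveq1} it factors through a product of two Eisenstein-type sums on $G_{2n}(\A)$, one in each variable.

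The core of the argument is then to show that this surviving contribution, viewed as a function of (say) the first variable $h_1 \in G_{2n}(\A)$, is cuspidal: its constant term along the standard maximal parabolics of $\Symp_{2n}$ vanishes. I would reduce this, via the usual unfolding of the constant-term integral against the unipotent radical, to the vanishing of a purely archimedean integral — an integral over a real unipotent group of the archimedean component of $f_\infty(Q_n\cdot(-,-),s)$, which by \eqref{degprincserhollemmaeq3c} is an explicit power of a determinant times a $J(g,I)^{-k}$ factor. This is the step flagged in the introduction as being ``quite different from Garrett's proof'': Garrett could invoke a classical computation of Siegel, which is unavailable here because of the nearly holomorphic twist and the general level. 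So I would instead evaluate the relevant real integral directly — writing the domain of integration explicitly, substituting the Iwasawa-type coordinates on $\H_{2n}$ restricted to the image of $\H_n\times\H_n$, and showing that the $S$-th Fourier coefficient of the resulting term vanishes unless $S$ is positive definite, whence the constant term along any proper parabolic is zero.

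The main obstacle I expect is precisely this archimedean integral computation: one must show that the Fourier expansion of each term in the unwound pullback is supported only on positive-definite $S$ (equivalently, that a certain integral of the form $\int \det(\cdots)^{-k}(\cdots)^{m_0}\,e^{2\pi i\,\trace(S\cdot)}$ over a real vector space vanishes for $S$ of rank $<2n$). The delicate points are keeping track of the extra polynomial-in-$Y^{-1}$ factors coming from $s=-m_0$ (i.e. the nearly holomorphic, rather than holomorphic, nature of the form) and handling the convergence/interchange of sum and integral, which is legitimate here only because we restricted to $m_0 \le k/2-n-1$. Once the archimedean vanishing is established, cuspidality in the second variable $Z_2$ follows by the same computation with the roles of $h_1$ and $h_2$ interchanged, completing the proof.
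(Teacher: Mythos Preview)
Your proposal is essentially correct and follows the same route as the paper: reduce to the single $r=n$ orbit via Lemma~\ref{support-f-ram} (using $N>1$), unfold the constant-term integral so that it becomes Eulerian, and then kill the archimedean factor by a direct computation of $\int_{M_n^{\sym}(\R)}\theta_S^{-1}(u(X))\det(X+Z)^{m_0-k}\det(X+\bar Z)^{m_0}\,dX$. One correction: the surviving contribution does \emph{not} ``factor through a product of two Eisenstein-type sums, one in each variable''; by Proposition~2.3 of \cite{PSS17} it is the single coupled sum $\sum_{x\in\Sp_{2n}(\Q)} f^{(Q_\tau)}(Q_n(xg_1,g_2),s)$, and \eqref{Qninveq1} is used only to shuttle the coset representative $x$ to the $g_2$ side during the unfolding, not to decouple the variables. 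Also note that the paper does not prove vanishing for all non-positive-definite $S$ directly; it uses the more economical criterion of Lemma~\ref{cuspidalitycriterion} (vanishing for $S$ with first row and column zero suffices), which is what makes the archimedean integral collapse to the elementary one-variable integral $I(k,m_0)=\int_{-\infty}^\infty (x+z)^{m_0-k}(x+\bar z)^{m_0}\,dx$.
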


We clarify what cuspidality means. Let $s=\frac {k-2m_0}{2n+1}-\frac 12$. The above theorem states that
$$
 \int\limits_{R(\Q) \backslash R(\A)} E((ug_1, g_2), s, f^{(Q_\tau)})\,du=\int\limits_{R(\Q) \backslash R(\A)} E((g_1, ug_2), s, f^{(Q_\tau)})\,du=0
$$
for each $(g_1,g_2)\in H_{2n,2n}(\A)$, and the unipotent radical $R$ of any maximal parabolic subgroup of $\Sp_{2n}$.

A nearly-holomorphic modular form is cuspidal in the classical sense (i.e., its Fourier expansion at each cusp is supported on positive definite matrices), if and only if its adelization is cuspidal in the sense that its integral over the unipotent radical of any maximal parabolic subgroup vanishes. For details of this argument, see, e.g., the proof of Proposition 4.5 of \cite{PSS14}.

Therefore Theorem \ref{Eis-cusp-thm} is equivalent to the following statement:  For each $m_0$ with $0 \le m_0 \le \frac{k}{2} - n -1$,
\begin{equation}\label{Eis-eqn-1}
 E_{k,N}^{\chi}\left(\mat{Z_1}{}{}{Z_2},-m_0; h\right) \in N_k(\Gamma_{2n}(N))^\circ \otimes  N_k(\Gamma_{2n}(N))^\circ,
\end{equation}
which is precisely the assertion of part 1 of Theorem \ref{main-Eis-arith-thm}.
\subsection{Unwinding of the Eisenstein series}
For the rest of Section \ref{s:cuspidal}, we will assume that $N>1$, or equivalently, that $S$ is non-empty. By Proposition 2.1 of \cite{PSS17}, we have the double coset decomposition
\begin{equation}\label{PSS17Prop21}
  G_{4n}(\Q) = \bigsqcup_{r=0}^n  P_{4n}(\Q)Q_r H_{2n,2n}(\Q),
\end{equation}
where the $Q_r$ are defined in (\ref{Qrdefeq}). By \eqref{Eis-ser-defn} and \eqref{PSS17Prop21},
\begin{equation}\label{Eis-unwind}
 E(g, s, f^{(Q_\tau)}) = \sum\limits_{r=0}^n\:\sum\limits_{\gamma \in \Delta_r \backslash H_{2n,2n}(\Q)} f^{(Q_\tau)}(Q_r \gamma g, s),
\end{equation}
where $\Delta_r := H_{2n,2n}(\Q) \cap Q_r^{-1} P_{4n}(\Q) Q_r$.

\begin{lemma}\label{support-f-ram}
 Let $p$ be a prime in $S$. Let the integer $m_p>0$ and the group $K_p(m_p)$ be as defined in Section \ref{section-choice}. Let $g_1, g_2 \in G_{2n}(\Q_p)$ with the same multiplier. Then, for $0 \leq r < n$,
 $$
  Q_r (g_1, g_2) \not\in P_{4n}(\Q_p) Q_n K_p(m_p).
 $$
\end{lemma}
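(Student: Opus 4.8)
The plan is a proof by contradiction. Suppose that $Q_r(g_1,g_2)\in P_{4n}(\Q_p)Q_nK_p(m_p)$, say $Q_r(g_1,g_2)=PQ_nk$ with $P\in P_{4n}(\Q_p)$ and $k\in K_p(m_p)$; write $g_i=\mat{A_i}{B_i}{C_i}{D_i}$. From this identity I will first deduce that a particular $2n\times 2n$ submatrix of the embedded matrix $Q_r(g_1,g_2)$ is invertible, and then contradict this by computing that submatrix and exhibiting a zero row whenever $r<n$. I do not anticipate a serious obstacle; the one idea that makes the argument work is singling out the right block of columns.

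Write $P=\mat{\alpha}{\beta}{0}{\delta}$ in $2n\times 2n$ blocks; since $P$ is invertible and block upper triangular, $\delta\in\GL_{2n}(\Q_p)$. Let $M$ be the $2n\times 4n$ matrix of the bottom $2n$ rows of $Q_r(g_1,g_2)$ (embedded via \eqref{embedding-defn}), and $M_n$ the bottom $2n$ rows of $Q_n$. Since the bottom block row of $P$ is $(\,0\ \ \delta\,)$, taking the bottom $2n$ rows of both sides of $Q_r(g_1,g_2)=PQ_nk$ gives $M=\delta\,M_nk$. Now keep only the $2n$ columns of $M$ in the first and third $n$-block positions, getting a $2n\times 2n$ matrix $M^\flat$; let $k^\flat$ be the $4n\times 2n$ matrix of the corresponding columns of $k$, so that $M^\flat=\delta\,(M_nk^\flat)$. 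As $k\equiv I_{4n}\bmod p$, the integral matrix $M_nk^\flat$ is congruent mod $p$ to the matrix of the first and third block columns of $M_n$, which is $\mat{0}{I_n}{I_n}{0}$, manifestly invertible; hence $M_nk^\flat\in\GL_{2n}(\Z_p)$, and therefore $M^\flat=\delta\,(M_nk^\flat)\in\GL_{2n}(\Q_p)$. In particular $\det M^\flat\neq 0$.

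On the other hand, multiplying $Q_r$ against the embedded pair $(g_1,g_2)$ (routine block arithmetic with \eqref{Qrdefeq} and \eqref{embedding-defn}) and reading off the bottom $2n$ rows in the first and third block columns gives $M^\flat=\mat{-C_1}{D_1}{\widetilde{I_r}A_1}{-\widetilde{I_r}B_1}$. Its bottom $n$ rows are $\widetilde{I_r}$ times the $n\times 2n$ matrix $(A_1\mid -B_1)$, and since $\widetilde{I_r}=\mat{0_{n-r}}{0}{0}{I_r}$ kills the top $n-r$ rows, $M^\flat$ has at least $n-r$ zero rows. Because $r<n$, $n-r\ge 1$, so $M^\flat$ is singular, $\det M^\flat=0$, contradicting the previous paragraph. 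This proves the lemma.

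The substantive point, and the only place the hypothesis $r<n$ intervenes, is the choice of the first and third block columns: on precisely those columns the $g_2$-part of the embedded pair disappears while only the rank-$r$ matrix $\widetilde{I_r}$ survives in the lower block of rows, which forces $M^\flat$ to drop rank. (When $r=n$ one has $\widetilde{I_n}=I_n$, no zero row occurs, and indeed $Q_n(I_{2n},I_{2n})=Q_n$ lies trivially in $P_{4n}(\Q_p)Q_nK_p(m_p)$.) Everything else is bookkeeping: the identity $M=\delta M_nk$ uses only that $P$ is block upper triangular, the congruence $M_nk^\flat\equiv\mat{0}{I_n}{I_n}{0}\bmod p$ is immediate from $k\equiv I_{4n}\bmod p$, and the assumption that $g_1$ and $g_2$ have the same multiplier is needed only so that $Q_r(g_1,g_2)$ is a genuine element of $G_{4n}(\Q_p)$.
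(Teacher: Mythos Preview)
Your proof is correct and follows the same overall strategy as the paper's: argue by contradiction, isolate the bottom $2n$ rows (where the block-upper-triangular $P$ contributes only the invertible block $\delta$), and obtain a $2n\times 2n$ matrix that must be invertible on one hand but has rank at most $n+r$ because of the factor $\widetilde{I_r}$ on the other.

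The execution differs in a way worth noting. The paper right-multiplies by $Q_n^{-1}$ (using that $Q_n$ normalizes $K_p(m_p)$), extracts six block congruences from the last three $n$-block columns of the bottom rows, and reassembles them into a product $\equiv I_{2n}\pmod{p^m}$ whose left factor has $n-r$ zero columns; both $g_1$ and $g_2$ appear in that computation. Your choice of the \emph{first and third} $n$-block columns is cleaner: on those columns the $g_2$-contribution in the embedding \eqref{embedding-defn} vanishes entirely, so $M^\flat=\mat{-C_1}{D_1}{\widetilde{I_r}A_1}{-\widetilde{I_r}B_1}$ depends only on $g_1$ and visibly has $n-r$ zero rows, while the factorization $M^\flat=\delta\,(M_nk^\flat)$ together with $M_nk^\flat\equiv\mat{0}{I_n}{I_n}{0}\pmod p$ gives invertibility directly. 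The underlying idea is the same, but your column choice avoids both the conjugation by $Q_n$ and the bookkeeping of the six congruences.
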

\begin{proof}
Suppose that $Q_r (g_1, g_2) \in P_{4n}(\Q_p) Q_n K_p(m_p)$. Then there is a $p' \in P_{4n}(\Q_p)$ such that $p' Q_r (g_1, g_2) Q_n^{-1} \in K_p(m_p)$. Here we have used that $K_p(m_p)$ is normalized by $Q_n$. Suppose $g_i = \mat{A_i}{B_i}{C_i}{D_i}$ for $i=1,2$. Then
$$
 Q_r (g_1, g_2) Q_n^{-1} = \begin{bsmallmatrix}\ast&\ast&\ast&\ast\\\ast&\ast&\ast&\ast\\\ast&\tilde{I}_rD_2-D_1&D_1&-\tilde{I}_rC_2\\ \ast & I_{n-r}'D_2+\tilde{I}_r(B_1-B_2)&-\tilde{I}_rB_1&\tilde{I}_rA_2- I_{n-r}'C_2\end{bsmallmatrix}
$$
where  $\widetilde{I_r}= \mat{0_{n-r}}{0}{0}{I_r}$ and $I'_{n-r}=I_n-\tilde I_r=\mat{I_{n-r}}{0}{0}{0}$. Write $p' \in P_{4n}$ with lower right entry being $\mat{h_1}{h_2}{h_3}{h_4}$. Then we see from the six entries in the last two rows and last three columns of $p' Q_r (g_1, g_2) Q_n^{-1}$ that
\begin{align*}
& h_1 (\tilde{I}_rD_2-D_1) + h_2( I_{n-r}'D_2+\tilde{I}_r(B_1-B_2)) \in M_n(p^m), \qquad h_1 D_1 - h_2 \tilde{I}_rB_1 \in I_n + M_n(p^m), \\
& -h_1 \tilde{I}_rC_2 + h_2 (\tilde{I}_rA_2- I_{n-r}'C_2) \in M_n(p^m), \\
&h_3 (\tilde{I}_rD_2-D_1) + h_4 (I_{n-r}'D_2+\tilde{I}_r(B_1-B_2)) \in M_n(p^m), \qquad h_3 D_1 - h_4 \tilde{I}_rB_1 \in M_n(p^m), \\
&  -h_3 \tilde{I}_rC_2 + h_4 (\tilde{I}_rA_2- I_{n-r}'C_2) \in I_n + M_n(p^m).
\end{align*}
We can put this in matrix form as follows,
$$
 \mat{-h_2 \tilde{I}_r}{h_1  \tilde{I}_r + h_2 I_{n-r}'}{-h_4 \tilde{I}_r}{h_3  \tilde{I}_r + h_4 I_{n-r}'} \mat{B_2}{A_2}{D_2}{C_2}\equiv 1 \pmod{p^m}.
$$
The first $n-r$ columns of the leftmost matrix are zero, and therefore, since $r<n$, the left hand side has determinant zero. This contradiction proves the lemma.
\end{proof}

It is easy to see that $P_{4n}(\Q_p) Q_n =  P_{4n}(\Q_p)\mat{\tau I_{2n}}{}{}{I_{2n}}Q_n \mat{\tau^{-1} I_{2n}}{}{}{I_{2n}}$. Together with \eqref{badplacecond2}, this implies that, for $p\in S$, the support of the local section $f_p^{(Q_\tau)}(\cdot,s)$ is $P_{4n}(\Q_p) Q_n K_p(m_p)$. Using Lemma \ref{support-f-ram}, it follows that
$$
 E((g_1, g_2), s, f^{(Q_\tau)}) =  \sum\limits_{\gamma \in \Delta_n \backslash H_{2n,2n}(\Q)} f^{(Q_\tau)}(Q_n \gamma (g_1, g_2), s).
$$
By Proposition 2.3 of \cite{PSS17}, a set of representatives of $\Delta_n \backslash H_{2n,2n}(\Q)$ is $\{(x,1) : x \in \Sp_{2n}(\Q)\}$. Hence
\begin{equation}\label{Eis-n}
 E((g_1, g_2), s, f^{(Q_\tau)}) =  \sum\limits_{x \in \Sp_{2n}(\Q)} f^{(Q_\tau)}(Q_n (xg_1, g_2), s).
\end{equation}
\subsection{The proof of Theorem \ref{Eis-cusp-thm}}

%\fbox{Changed from ``The proof of Theorem \ref{Eis-cusp-thm}''.}

Let $\psi$ be the additive character of $\Q \backslash \A$ which is $x \mapsto e^{2 \pi i x}$ for $x \in \R$ and is trivial on $\Z_p$ for every finite prime $p$.  For a symmetric matrix $S$ in $M_n(\Q)$, we obtain a character of the unipotent radical of the Siegel parabolic by setting
$$
 U(\A) \ni u(X) = \mat{1}{X}{}{1} \mapsto \theta_S(u(X)) := \psi({\rm tr}(SX)).
$$
Note that we earlier used $S$ to denote the set of primes dividing $N$. From the context, the meaning of $S$ should be clear.
\begin{lemma}\label{cuspidalitycriterion}
 Let $\Phi$ be an automorphic form on $\Sp_{2n}(\A)$. Assume that
 \begin{equation}\label{cuspidalitycriterioneq1}
  \int\limits_{U(\Q)\backslash U(\A)}\Phi(ug)\theta_S^{-1}(u)\,du=0
 \end{equation}
 for all $g\in\Sp_{2n}(\A)$ and all $S\in M^{\rm sym}_n(\Q)$ for which the first row and column are zero. Then $\Phi$ is a cusp form.
\end{lemma}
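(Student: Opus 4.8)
The plan is to show that the vanishing of the single Fourier coefficient $\theta_S$ with the specified support (first row and column zero) forces the vanishing of \emph{all} constant terms along unipotent radicals of maximal parabolics, which is the definition of a cusp form. First I would fix a maximal parabolic $P_j$ of $\Sp_{2n}$ (the stabilizer of an isotropic subspace of dimension $j$, $1\le j\le n$) and let $R_j$ be its unipotent radical; I want to prove $\int_{R_j(\Q)\backslash R_j(\A)}\Phi(rg)\,dr=0$ for all $g$. The key observation is that $R_j$ is contained in the unipotent radical $U$ of the Siegel parabolic composed with a unipotent subgroup of the Levi, but more usefully, $U$ itself is contained in every $R_j$ only for $j$ small; instead the right move is the standard one: the constant term of $\Phi$ along $R_j$ is itself an automorphic form on the Levi $M_j\cong\GL_j\times\Sp_{2(n-j)}$, and one computes its Fourier coefficients along the Siegel unipotent of the $\Sp_{2(n-j)}$-factor (or along a well-chosen abelian unipotent) by Fourier-expanding in the remaining directions of $U$.

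Concretely, I would argue as follows. Write $U=U'\ltimes U''$ where the coordinates $X$ of $U$ are split into the block $X_{11}$ (the top-left $j\times j$ block, which lies in $R_j\cap U$ after conjugation) and the rest. Expanding $\Phi$ along $U(\Q)\backslash U(\A)$ gives $\Phi(ug)=\sum_{S}\Phi_S(ug)$ where $S$ runs over $M_n^{\rm sym}(\Q)$ and $\Phi_S$ is the $\theta_S$-Fourier coefficient. The hypothesis \eqref{cuspidalitycriterioneq1} says $\Phi_S\equiv 0$ whenever the first row and column of $S$ vanish; by translating $g$ by elements of the Levi $\GL_n(\Q)\subset\Sp_{2n}(\Q)$ (which acts on $S$ by $S\mapsto {}^tA S A$) and using automorphy, one upgrades this to: $\Phi_S\equiv 0$ whenever $S$ is \emph{not} of full rank $n$, i.e., whenever $S$ is singular. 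Indeed a singular symmetric $S$ over $\Q$ can be moved by a $\GL_n(\Q)$-congruence to one whose first row and column are zero. Hence only the nondegenerate $S$ survive in the expansion of $\Phi$ along $U$.

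Now I take the constant term $\Phi^{(R_j)}(g)=\int_{R_j(\Q)\backslash R_j(\A)}\Phi(rg)\,dr$ and observe that $R_j(\A)\supset$ a piece of $U(\A)$; integrating $\Phi=\sum_{S\text{ nondeg}}\Phi_S$ over the relevant sub-unipotent kills all the terms $\Phi_S$ whose character is nontrivial on that sub-unipotent, and the ones that survive correspond to $S$ supported (after the Levi action) on the complementary block — but such $S$ are forced to be singular, contradiction, unless the sum is empty. Thus $\Phi^{(R_j)}=0$. Running this for every $j$ gives cuspidality. The main obstacle, and the step requiring care, is the bookkeeping in this last paragraph: matching the unipotent radical $R_j$ of a maximal parabolic against the coordinates of the Siegel unipotent $U$, and checking that the residual characters $\theta_S$ that can appear all have singular $S$, so that the hypothesis applies. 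This is a purely combinatorial/linear-algebra computation with block matrices in $\Sp_{2n}$, of the same flavor as the computations in Section \ref{s:cuspidal}, and I would carry it out explicitly for the two extreme cases $j=1$ (Siegel-type, where the argument is essentially immediate from the rank reduction) and $j=n$ before stating the general case.
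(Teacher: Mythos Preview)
Your plan is correct, but it takes a somewhat different route from the paper's proof. The paper never invokes the $\GL_n(\Q)$-action. Instead it writes the abelian group $M_n^{\rm sym}$ as a direct sum $M'\oplus M_{n-1}^{\rm sym}$, where $M'$ is the space of symmetric matrices supported on the first row and column and $M_{n-1}^{\rm sym}$ is embedded as matrices with first row and column zero. For $S\in M_{n-1}^{\rm sym}(\Q)$ the character $\theta_S$ is trivial on $u(M')$, so the hypothesis, rewritten as a double integral, says that the function
\[
X\;\longmapsto\;\int_{M'(\Q)\backslash M'(\A)}\Phi\!\left(\mat{1}{X+Y}{}{1}g\right)dY
\]
on $M_{n-1}^{\rm sym}(\Q)\backslash M_{n-1}^{\rm sym}(\A)$ has all Fourier coefficients zero, hence vanishes identically; in particular the constant term of $\Phi$ along $u(M')$ vanishes. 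The paper then observes that $u(M')$ equals $\bigcap_j R_j$ (concretely, $u(M')=R_1\cap U$ and $R_1\cap U\subset R_j\cap U\subset R_j$ for every $j$), so the constant term along each $R_j$ vanishes and $\Phi$ is cuspidal.

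Your argument first upgrades the hypothesis via the Levi action to ``$\Phi_S\equiv 0$ for all singular $S$'', and then for each $j$ uses this to kill the constant term along $U_j:=R_j\cap U$, hence along $R_j$. This certainly works; the intermediate statement (vanishing of every degenerate Fourier coefficient) is even a bit stronger than what the paper records. The price is the extra Levi step and, for each $j$, the verification that $U_j\trianglelefteq R_j$ (true because $R_j$ is two-step nilpotent with center contained in $U_j$), which you correctly flag as the point needing care. Two small corrections to your write-up: $U$ is abelian, so ``$U=U'\ltimes U''$'' should just be a direct sum; and $R_j\cap U$ is the set of $u(X)$ with the lower-right $(n-j)\times(n-j)$ block of $X$ equal to zero, not merely the top-left $j\times j$ block. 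With those fixed, your plan goes through.
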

\begin{proof}
We have
\begin{equation}\label{cuspidalitycriterioneq2}
 M^{\rm sym}_n=M'\oplus M^{\rm sym}_{n-1},
\end{equation}
where we think of $M^{\rm sym}_{n-1}$ embedded into $M^{\rm sym}_n$ as matrices whose first row and column are zero, and where
\begin{equation}\label{cuspidalitycriterioneq3}
 M'=\begin{bsmallmatrix} *&*&\ldots&*\\ *&0&\ldots&0\\\vdots&\vdots&&\vdots\\ *&0&\ldots&0\end{bsmallmatrix}\subset M^{\rm sym}_n.
\end{equation}
Hence $M'$ is a space of dimension $n$. Let $S\in M^{\rm sym}_{n-1}(\Q)$, considered as an element of $M^{\rm sym}_n(\Q)$ via \eqref{cuspidalitycriterioneq2}.
%\ameya{Added this for referee 1}
Since $\theta_S^{-1}(X) = - \psi({\rm tr}(SX))$, by hypothesis, for any $g\in\Sp_{2n}(\A)$,
\begin{equation}\label{cuspidalitycriterioneq4}
 \int\limits_{M^{\rm sym}_n(\Q)\backslash M^{\rm sym}_n(\A)}\Phi(\mat{1}{X}{}{1}g)\psi({\rm tr}(SX))\,dX=0.
\end{equation}
Using \eqref{cuspidalitycriterioneq2}, it follows that
\begin{align*}
 0&=\int\limits_{M^{\rm sym}_{n-1}(\Q)\backslash M^{\rm sym}_{n-1}(\A)}\:\int\limits_{M'(\Q)\backslash M'(\A)}\Phi(\mat{1}{X+Y}{}{1}g)\psi({\rm tr}(S(X+Y)))\,dY\,dX\\
 &=\int\limits_{M^{\rm sym}_{n-1}(\Q)\backslash M^{\rm sym}_{n-1}(\A)}\bigg(\int\limits_{M'(\Q)\backslash M'(\A)}\Phi(\mat{1}{X}{}{1}\mat{1}{Y}{}{1}g)\,dY\bigg)\psi({\rm tr}(SX))\,dX.
\end{align*}
Hence the inner integral is zero as a function of $X$, and in particular
\begin{equation}\label{cuspidalitycriterioneq5}
 \int\limits_{M'(\Q)\backslash M'(\A)}\Phi(\mat{1}{Y}{}{1}g)\,dY=0.
\end{equation}
Since $M'$ is the intersection of the unipotent radicals of all standard maximal parabolics, it follows that $\Phi$ is cuspidal.
\end{proof}

We are now ready to prove Theorem \ref{Eis-cusp-thm}. Fix $(g_1,g_2)\in H_{2n,2n}(\A)$. By Lemma \ref{cuspidalitycriterion}, and by symmetry between the first and second variables, it suffices to prove that
$$
 \int\limits_{U(\Q) \backslash U(\A)} E((ug_1, g_2), s, f^{(Q_\tau)}) \theta_S^{-1}(u)\,du=0
$$
for all $S\in M^{\rm sym}_n(\Q)$ whose first row and column are zero.
%
%
% We will fix $g_2 \in G_{2n}$. We define the $S$-Fourier coefficient of $E((\cdot, g_2), s, f^{(Q_\tau)})$ on $G_{2n}(\A)$ by
% $$
%  W_S(g_1) = \int\limits_{U(\Q) \backslash U(\A)} E((ug_1, g_2), s, f^{(Q_\tau)}) \theta_S^{-1}(u)\,du.
% $$
% We use the criteria for cuspidality given in \cite[page 458]{gar2}. Precisely, in order to prove Theorem \ref{Eis-cusp-thm}, it suffices to show that for all $g_1 \in G_{2n}(\A)$ with the same multiplier as $g_2$,
% and for any $S$ that is not totally positive definite, the function $W_S(g)$ is zero.
% It suffices to consider $g_1$ of the form $g_1 = pk$ with $p \in P_{2n}(\R)$ and $k \in prod_v G_{2n}(\O\Q_p)$.
%
% \fbox{\begin{minipage}{.9 \linewidth}
% I think we should give a proof here that the criterion we are using is equivalent to the criterion of cuspidality in the usual sense. Or at least briefly summarize the argument.
% \end{minipage}}
%
%\ameya{Moved the next line from end of page 4 to here for Referee 1}
Recall that we are choosing values of $s$ such that $(2n+1)(s+1/2) = k - 2m_0$ for $0 \leq m_0 \leq k/2-n-1$.
 From (\ref{Eis-n}), we see that
\begin{align*}
 E((g_1, g_2), s, f^{(Q_\tau)}) &= \sum\limits_{x \in \Sp_{2n}(\Q)/U(\Q)}\:\sum\limits_{v \in U(\Q)} f^{(Q_\tau)}(Q_n (xvg_1, g_2), s) \\
&= \sum\limits_{x \in \Sp_{2n}(\Q)/U(\Q)}\:\sum\limits_{v \in U(\Q)} f^{(Q_\tau)}(Q_n (vg_1, x^{-1}g_2), s).
\end{align*}
Here, we have used (\ref{Qninveq1}). Hence it is enough to show that
$$
 \int\limits_{U(\Q)\backslash U(\A)} \theta_S^{-1}(u) \sum\limits_{v \in U(\Q)} f^{(Q_\tau)}(Q_n (vug_1, g_2), s)\,du = \int\limits_{U(\A)} \theta_S^{-1}(u)  f^{(Q_\tau)}(Q_n (ug_1, g_2), s)\,du
$$
is zero. The above integral is Eulerian. We will show that any of the archimedean components
$$
 \int\limits_{U(\R)} \theta_S^{-1}(u)f_\infty(Q_n (ug_1, g_2), s)\,du
$$
is already zero. By the definition of $f_\infty$ given in (\ref{degprincserhollemmaeq3c}), it is enough to consider $g_1 = Q(v,y)$, $g_2 = u(X_0)Q(v, y_0) \in G_{2n}(\R)$. Here $Q(v,y) = \mat{y}{}{}{v\,^ty^{-1}}$ for $v > 0$ and $y \in \GL_n(\R)^+$. Recall, for $(g_1, g_2) \in H_{2n,2n}$ it is necessary that $\mu_n(g_1) = \mu_n(g_2)$. Once again using (\ref{Qninveq1}),
\begin{align*}
 & \int\limits_{ U(\R)} \theta_S^{-1}(u)  f_\infty(Q_n (uQ(v,y), u(X_0)Q(v,y_0)), s)\,du\\
 &\qquad=\int\limits_{M_{n}^{\sym}(\R)} \theta_S^{-1}(u(X))  f_\infty(Q_n (u(X)Q(v,y), u(X_0)Q(v,y_0)), s)\,dX \\
 &\qquad = \int\limits_{M_{n}^{\sym}(\R)} \theta_S^{-1}(u(X))  f_\infty(Q_n (u(y_0^{-1}(X-X_0)v\,^ty_0^{-1})Q(1,y_0^{-1}y), 1), s)\,dX\\
 & \qquad = \theta_S^{-1}(u(X_0))v^{-\frac{n(n+1)}2}\int\limits_{M_{n}^{\sym}(\R)} \theta_{v^{-1}S}^{-1}(u(X))  f_\infty(Q_n (u(y_0^{-1}X\,^ty_0^{-1})Q(1,y_0^{-1}y), 1), s)\,dX.
\end{align*}
The factor $v^{-\frac{n(n+1)}2}$ comes from the variable transformation $X\mapsto v^{-1}X$.
%\ameya{Added next sentence for Referee 1}
We will show that, up to a factor, we can reduce to evaluating the $f_\infty$ term at a block lower triangular matrix. Let us abbreviate $B = y_0^{-1}X\,^ty_0^{-1}$ and $A = y_0^{-1}y$. Then
\begin{align}\label{finftycalceq1}
 f_\infty(Q_n(u(B) Q(1, A), 1), s)&= f_\infty(\begin{bsmallmatrix}I_n\\&I_n\\&I_n&I_n\\I_n&&&I_n\end{bsmallmatrix} \begin{bsmallmatrix}I_n\\&&&I_n\\&&I_n\\&-I_n\end{bsmallmatrix} (u(B) Q(1, A), 1), s)\nonumber\\
 &=f_\infty(\begin{bsmallmatrix}I_n\\&I_n\\&I_n&I_n\\I_n&&&I_n\end{bsmallmatrix}  \begin{bsmallmatrix}I_n&&-B\\&I_n\\&&I_n\\&&&I_n\end{bsmallmatrix}(Q(1, A), 1), s)\nonumber\\
 &=(-i)^{-nk}f_\infty(\begin{bsmallmatrix}I_n&B&-B\\&I_n\\&&I_n\\&B&-B&I_n\end{bsmallmatrix} \begin{bsmallmatrix}I_n\\&I_n\\&I_n&I_n\\I_n&&&I_n\end{bsmallmatrix} (Q(1, A), 1), s)\nonumber\\
% &=f_\infty(\begin{bsmallmatrix}I_n\\&I_n\\&&I_n\\&B&&I_n\end{bsmallmatrix} \begin{bsmallmatrix}I_n\\&I_n\\&I_n&I_n\\I_n&&&I_n\end{bsmallmatrix} (Q(1, A), 1), s)\nonumber\\
 &=(-i)^{-nk}f_\infty(\begin{bsmallmatrix}I_n\\&I_n\\&&I_n\\&B&&I_n\end{bsmallmatrix} \begin{bsmallmatrix}I_n\\&I_n\\&I_n&I_n\\I_n&&&I_n\end{bsmallmatrix} \begin{bsmallmatrix}A\\&I_n\\&&\,^t\!A^{-1}\\&&&I_n\end{bsmallmatrix}, s)\nonumber\\
% &=f_\infty(\begin{bsmallmatrix}I_n\\&I_n\\&&I_n\\&B&&I_n\end{bsmallmatrix} \begin{bsmallmatrix}A\\&I_n\\&& \,^t\!A^{-1}\\&&&I_n\end{bsmallmatrix} \begin{bsmallmatrix}I_n\\&I_n\\&{}^t\!A&I_n\\A&&&I_n\end{bsmallmatrix}, s)\nonumber\\
 &=(-i)^{-nk}f_\infty(\begin{bsmallmatrix}A\\&I_n\\&&\,^t\!A^{-1}\\&&&I_n\end{bsmallmatrix} \begin{bsmallmatrix}I_n\\&I_n\\&\,^t\!A&I_n\\A&B&&I_n\end{bsmallmatrix}, s)\nonumber\\
 &=(-i)^{-nk}{\rm sgn}^k(\det(A)) |\det(A)|^{k-2m_0}f_\infty(\begin{bsmallmatrix}I_n\\&I_n\\&{}^tA&I_n\\A&B&&I_n\end{bsmallmatrix}, s).
\end{align}

\begin{lemma}\label{Iwasawa-lemma}
 Let $C \in M_{2n}(\R)$ be a symmetric matrix such that  $I_{2n}+C^2$ is invertible. Then
 \begin{equation}\label{Iwasawa-eqn}
  \mat{I_{2n}}{}{C}{I_{2n}} = \mat{I_{2n}}{U}{}{I_{2n}} \mat{Y}{}{}{{}^tY^{-1}} g,
 \end{equation}
 where
 $$
  U = C(I_{2n}+C^2)^{-1}, \qquad Y{}^tY = (I_{2n}+C^2)^{-1},
 $$
 and $g \in K^{(2n)}\simeq U(2n)$ is such that $J(g, iI_{2n}) = \det(Y) \det(iC+I_{2n})$.
\end{lemma}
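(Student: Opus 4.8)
The plan is to verify the Iwasawa-type decomposition \eqref{Iwasawa-eqn} directly, by producing the candidate factors $U$, $Y$, $g$ explicitly and checking that the product is $\mat{I_{2n}}{}{C}{I_{2n}}$. First I would set $P = I_{2n}+C^2$, which is symmetric and, by hypothesis, invertible; note that $P$ commutes with $C$. Define $U = CP^{-1}$ (symmetric, since $C$ and $P^{-1}$ commute and are symmetric), and let $Y$ be any real matrix with $Y\,{}^tY = P^{-1}$ (such $Y$ exists precisely when $P^{-1}$ — equivalently $P$ — is positive definite; see the remark below). Then the matrix $g$ is \emph{forced}: from \eqref{Iwasawa-eqn} we must have
$$
 g = \mat{Y}{}{}{{}^tY^{-1}}^{-1}\mat{I_{2n}}{-U}{}{I_{2n}}\mat{I_{2n}}{}{C}{I_{2n}} = \mat{Y^{-1}(I_{2n}-UC)}{-Y^{-1}U}{{}^tY\,C}{{}^tY}.
$$
So the real content is to check two things: (i) this $g$ lies in $K^{(2n)} \simeq U(2n)$, i.e. $g \in \Sp_{4n}(\R)$ and $g$ fixes $iI_{2n}$; and (ii) the automorphy factor is $J(g,iI_{2n}) = \det(Y)\det(iC+I_{2n})$.

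For (i), the cleanest route is to observe that the lower block row of $g$ is $({}^tY\,C,\ {}^tY)$, so $g(iI_{2n}) = (\text{upper})(\text{lower})^{-1}$ evaluated at $iI_{2n}$ equals $\big(Y^{-1}(I_{2n}-UC)\cdot iI_{2n} - Y^{-1}U\big)\big({}^tY\,C\cdot iI_{2n} + {}^tY\big)^{-1}$; substituting $U = CP^{-1}$ and simplifying using $I_{2n}-UC = I_{2n}-C^2P^{-1} = P^{-1}$ and $I_{2n}-UC$ vs.\ the denominator, one finds the numerator equals $Y^{-1}P^{-1}(iI_{2n}+C)$ wait — more carefully, numerator $= Y^{-1}(iP^{-1} - CP^{-1}) = Y^{-1}P^{-1}(iI_{2n}-C)$ and denominator $= {}^tY(iC+I_{2n})$, and since $P^{-1} = Y\,{}^tY$ one gets $g(iI_{2n}) = Y^{-1}Y\,{}^tY(iI_{2n}-C)\,(iC+I_{2n})^{-1}\,{}^tY^{-1}$. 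Now $(iI_{2n}-C)(iC+I_{2n})^{-1} = i(I_{2n}+iC)^{-1}$ wait: $(iI_{2n}-C) = i(I_{2n}+iC)$ and $(iC+I_{2n}) = (I_{2n}+iC)$, so the ratio is $i I_{2n}$, giving $g(iI_{2n}) = i\,{}^tY\,{}^tY^{-1} = iI_{2n}$, as needed. Membership in $\Sp_{4n}(\R)$ then follows either from the fact that $g$ is a product of three symplectic matrices (the two explicit unipotent/torus factors and the left side $\mat{I}{}{C}{I}$, all manifestly symplectic), so $g$ is symplectic automatically; combined with $g(iI_{2n})=iI_{2n}$ this places $g \in K^{(2n)}$.

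For (ii), I would read off $J(g, iI_{2n}) = \det\big({}^tY\,C\cdot iI_{2n} + {}^tY\big) = \det({}^tY)\det(iC + I_{2n}) = \det(Y)\det(iC+I_{2n})$, using $\det({}^tY) = \det(Y)$; this is exactly the claimed formula, so in fact (ii) is immediate once the lower block row of $g$ has been identified. The one genuine subtlety — the main (minor) obstacle — is the existence of $Y$ with $Y\,{}^tY = (I_{2n}+C^2)^{-1}$: this requires $I_{2n}+C^2$ to be positive definite, which indeed holds because $C$ is real symmetric, hence diagonalizable with real eigenvalues $\lambda$, so $I_{2n}+C^2$ has eigenvalues $1+\lambda^2 > 0$; one then takes $Y = (I_{2n}+C^2)^{-1/2}$ via the functional calculus (any such $Y$ works, the decomposition being unique only up to right multiplication of $g$ by an element of the isotropy group $\mathrm{U}(2n)$, which does not affect $|J|$, and the stated formula pins down $J(g,iI_{2n})$ given the choice of $Y$). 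I would include a one-line remark that the hypothesis "$I_{2n}+C^2$ invertible" is automatically upgraded to "positive definite" for real symmetric $C$, so the lemma is stated with the weakest natural hypothesis.
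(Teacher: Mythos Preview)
Your proposal is correct and hinges on the same computation as the paper's proof: both act on $iI_{2n}$ and use the cocycle relation for $J$. The only difference is direction---the paper assumes an Iwasawa decomposition exists, acts on $iI_{2n}$, and \emph{derives} $U$ and $Y\,{}^tY$ by equating real and imaginary parts of $iI_{2n}=(U+iY\,{}^tY)(iC+I_{2n})$, whereas you take the stated $U$, $Y$ and \emph{verify} that the resulting $g$ fixes $iI_{2n}$; your route has the minor advantage of being self-contained (you explicitly produce $g$ and justify the existence of $Y$ via positive-definiteness of $I_{2n}+C^2$, a point the paper leaves implicit).
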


\begin{proof}
Act with both sides of \eqref{Iwasawa-eqn} on $iI_{2n}$ to get $iI_{2n} (iC+I_{2n})^{-1} = U + i Y{}^tY$. Hence we get $iI_{2n} = (U + i Y{}^tY)(iC+I_{2n}) = U - Y{}^tYC+i(UC+Y\,^tY)$. Comparing the real and imaginary parts of both sides we get the values of $U$ and $Y$ in terms of $C$. Applying the $J$ function to both sides of (\ref{Iwasawa-eqn}), we get $J(\mat{I_{2n}}{}{C}{I_{2n}}, iI_{2n}) = J(\mat{Y}{}{}{{}^tY^{-1}}, iI_{2n}) J(g, iI_{2n})$. This concludes the proof.
\end{proof}

We now put $C = \mat{}{{}^t\!A}{A}{B}$ and use Lemma \ref{Iwasawa-lemma} to evaluate $f_\infty(\ldots)$ in \eqref{finftycalceq1}. We have
\begin{align*}
f_\infty(\mat{I_{2n}}{}{C}{I_{2n}}, s) &= f_\infty(\mat{I_{2n}}{U}{}{I_{2n}} \mat{Y}{}{}{{}^tY^{-1}} g, s)\\
 &\stackrel{\eqref{degprincserhollemmaeq3c}}{=} {\rm sgn}^k(\det(Y)) |\det(Y)|^{k-2m_0} J(g, iI_{2n})^{-k} \\
 &=  {\rm sgn}^k(\det(Y)) |\det(Y)|^{k-2m_0} \det(Y)^{-k} \det(I_{2n}+iC)^{-k} \\
 &= |\det(Y)|^{-2m_0} \det(I_{2n}+iC)^{-k} \\
 &= \det(I_{2n}+iC)^{m_0-k} \det(I_{2n}-iC)^{m_0}.
\end{align*}
%\ameya{Rephrased for Referee 1}
Set $Z = -i(y\,^t\!y + y_0{}^t\!y_0)$. Computing both sides of the equality below, we get
$$
 \det(I_{2n}+iC)= \det(y_0)^{-2}i^n \det(X+Z).
$$
(Observe that $X+Z$ is invertible, since $X+\bar Z$ is an element of the Siegel upper half space $\H_n$.) Putting all of this together, we get
$$
 f_\infty(Q_n(u(B) Q(1, A), 1), s) = \det(y_0y)^{k-2m_0} \det(X+Z)^{m_0-k} \det(X+\bar{Z})^{m_0}.
$$
Hence, we want to show that the integral
\begin{equation}\label{Int0-1}
 \int\limits_{M_{n}^{\sym}(\R)} \theta_S^{-1}(u(X))  \det(X+Z)^{m_0-k} \det(X+\bar{Z})^{m_0}\,dX
\end{equation}
is zero for any $S\in M^{\rm sym}_n(\R)$ for which the first row and column are zero.

% unless $S$ is positive definite. Suppose that $S$ is positive {\it semi-definite} but not positive definite. Then there is a non-zero column vector $v\in\R^n$ such that ${}^tvSv = 0$. One can obtain $g \in \SL_n(\R)$ whose first column is the vector $v$. Hence the $(1,1)$ matrix entry of ${}^tgSg$ is zero. After an appropriate change of variables in (\ref{Int0-1}), we can assume that $S$ is a matrix whose $(1,1)$ matrix entry is zero.
Let $x$ denote the $(1,1)$ matrix entry of the variable $X$. By assumption on $S$, the quantity $\theta_S^{-1}(u(X))$ does not depend on $x$. Let $M_{ij}$ be the submatrix of $X+Z$ obtained by eliminating the $i$-th row and $j$-th column. Then, expanding along the first row, we get
\begin{align*}
 \det(X+Z)&=\det(\begin{bsmallmatrix}x+z_{11}&x_{12}+z_{12}&\ldots&x_{1n}+z_{1n}\\x_{21}+z_{21}&x_{22}+z_{22}&&\vdots\\\vdots&&\ddots&\vdots\\x_{n1}+z_{n1}&\ldots&\ldots&x_{nn}+z_{nn}\end{bsmallmatrix})\\
 &=(x+z_{11})\det(M_{11})+\sum_{j=2}^n(-1)^{j+1}(x_{1j}+z_{1j})\det(M_{1j}).
\end{align*}
Observe that $\bar M_{11}$ is an element of the Siegel upper half space of degree $n-1$, and hence invertible. We can therefore write
$$
 \det(X+Z)=\det(M_{11})\bigg(x+z_{11}+\det(M_{11})^{-1}\sum_{j=2}^n(-1)^{j+1}(x_{1j}+z_{1j})\det(M_{1j})\bigg).
$$
Hence $\det(X+Z)=f(X)(x+g(X))$ with functions $f$ and $g$ that do not depend on $x$. We also need that $g(X)$ is not real; this follows from Lemma \ref{Hn11lemma} below. This discussion implies that, in \eqref{Int0-1}, there is an inner integral with respect to the variable $x$ of the form
$$
 \int\limits_{-\infty}^\infty (x+z)^{m_0-k}(x+\bar{z})^{m_0}\,dx,\qquad z\notin\R.
$$
Set $I(\ell,m)=\int_{-\infty}^\infty (x+z)^{m-\ell}(x+\bar{z})^m\,dx$ for integers $\ell,m$ with $2m-\ell<-1$. Using integration by parts, one can check that
$$
 I(\ell, m) = \frac m{\ell-m -1} I(\ell-2,m-1).
$$
Applying the above relation $m$ times, we get
$$
 I(\ell,m) = \frac{m! (\ell-2m-1)!}{(\ell-m-1)!} I(\ell-2m,0).
$$
Since $\ell-2m > 1$ and $z\notin\R$, we can easily see that $I(\ell-2m,0) = 0$. Hence $I(\ell,m)=0$ whenever $2m-\ell<-1$. Since our assumption is that $0 \leq m_0 \leq k/2-n-1$, it follows that $I(k,m_0)=0$. This concludes the proof of Theorem \ref{Eis-cusp-thm}.\qed

\begin{lemma}\label{Hn11lemma}
 Let $M\in\H_n$, and let $M_{11}$ be the submatrix of $M$ obtained by eliminating the first row and the first column. Then
 $$
  \frac{\det(M)}{\det(M_{11})}\notin\R.
 $$
\end{lemma}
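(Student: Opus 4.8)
The plan is to reduce the claim to a positivity statement about the imaginary part of a Schur-complement-type expression. Write $M = X + iY$ with $X,Y \in M_n^{\sym}(\R)$ and $Y > 0$. I would first isolate the first row and column: write
$$
 M = \begin{bsmallmatrix} a & {}^t\!b \\ b & M_{11} \end{bsmallmatrix},
$$
where $a \in \C$ is the $(1,1)$-entry, $b \in \C^{n-1}$ is the rest of the first column, and $M_{11} \in \H_{n-1}$ (this last fact is immediate, since the imaginary part of $M_{11}$ is a principal submatrix of $Y > 0$, hence itself positive definite, and in particular $M_{11}$ is invertible). The standard block-determinant (Schur complement) identity then gives
$$
 \det(M) = \det(M_{11}) \cdot \bigl(a - {}^t\!b\, M_{11}^{-1} b\bigr),
$$
so that
$$
 \frac{\det(M)}{\det(M_{11})} = a - {}^t\!b\, M_{11}^{-1}\, b.
$$
Thus the lemma is equivalent to showing that this scalar has nonzero imaginary part.

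The key step is to compute $\Im\bigl(a - {}^t\!b\, M_{11}^{-1} b\bigr)$ and show it is positive. Since ${}^t\!b\,M_{11}^{-1}b$ is a scalar, it equals its own transpose, so ${}^t\!b\,M_{11}^{-1}b = {}^t\!b\,{}^t\!(M_{11}^{-1})\,b$, and averaging, ${}^t\!b\,M_{11}^{-1}b = {}^t\!b\,\tfrac12(M_{11}^{-1}+{}^t\!(M_{11})^{-1})b$; here I can use symmetry of $M_{11}$ to see $M_{11}^{-1}$ is symmetric, so this is just a convenient bookkeeping remark. The cleaner route: consider the vector $w = -M_{11}^{-1} b \in \C^{n-1}$ and the vector $u = {}^t(1, {}^t\!w) \in \C^n$. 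A direct expansion shows
$$
 \overline{{}^t u}\, M\, u = \bar{u}_1\bigl(a + {}^t\!b\, w\bigr) + (\text{terms involving } M_{11}, b, w),
$$
and with the choice $w = -M_{11}^{-1}b$ the cross-terms are designed to telescope so that $\overline{{}^t u}\,M\,u = a - {}^t\!b\,M_{11}^{-1}b$ — essentially because $u$ is chosen to lie in the $M$-orthogonal complement of the last $n-1$ coordinate directions after the Schur reduction. Taking imaginary parts, $\Im\bigl(\overline{{}^t u}\,M\,u\bigr) = \overline{{}^t u}\,Y\,u$ (using that $Y$ is real symmetric, so $\overline{{}^t u}\,Y\,u$ is real and equals $\Im$ of the $iY$ contribution, while $\overline{{}^t u}\,X\,u$ is real since $X$ is real symmetric — one has to be slightly careful: for complex $u$, $\overline{{}^t u}\,X\,u$ is real because $X$ is Hermitian, being real symmetric). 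Therefore
$$
 \Im\!\left(\frac{\det(M)}{\det(M_{11})}\right) = \overline{{}^t u}\, Y\, u,
$$
and since $Y > 0$ and $u \neq 0$ (its first coordinate is $1$), this is strictly positive, hence nonzero.

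The main obstacle — really the only subtlety — is the sign/reality bookkeeping for Hermitian forms evaluated on complex vectors: $X$ and $Y$ are real symmetric, not Hermitian in the usual sense, but for $u \in \C^n$ the quantity $\overline{{}^t u}\,X\,u = \sum_{i,j}\bar u_i X_{ij} u_j$ has complex conjugate $\sum_{i,j} u_i X_{ij}\bar u_j = \sum_{i,j}\bar u_j X_{ji} u_i = \overline{{}^t u}\,X\,u$ (using $X = {}^t\!X$), so it is indeed real; the same for $Y$. Given this, the imaginary part of $\overline{{}^t u}\,M\,u$ is exactly $\overline{{}^t u}\,Y\,u > 0$. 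I would double-check the telescoping identity $\overline{{}^t u}\,M\,u = a - {}^t\!b\,M_{11}^{-1}b$ by a short direct computation (it is just the scalar Schur complement applied to the quadratic form), but there is no genuine difficulty there. An alternative, if one prefers to avoid choosing $u$ explicitly: apply the known fact that for $M \in \H_n$ one has $\det(M) \neq 0$ and more precisely $\Im(M^{-1}) < 0$ (so $M^{-1} = \bar{M}^{-1} + $ something, etc.), but the Schur complement argument above is the most self-contained and I would go with it.
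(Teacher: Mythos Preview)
Your argument is correct. The Schur complement identity gives $\det(M)/\det(M_{11}) = a - {}^t b\,M_{11}^{-1}b$, and your verification that this equals $\overline{{}^t u}\,M\,u$ for $u = {}^t(1,-{}^t(M_{11}^{-1}b))$ is right: since $Mu$ has last $n-1$ entries zero, the pairing with $\bar u$ collapses to the first coordinate. The reality check for $\overline{{}^t u}\,X\,u$ and $\overline{{}^t u}\,Y\,u$ is fine, and positive-definiteness of $Y$ over $\C^n$ follows by splitting $u$ into real and imaginary parts.

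The paper, however, does this in one line by a different trick: it observes that $\det(M_{11})/\det(M)$ is the $(1,1)$-entry of $M^{-1}$ (cofactor formula), and that $-M^{-1}\in\H_n$ because $-M^{-1}=J_n\cdot M$ is the image of $M$ under the symplectic element $J_n=\mat{}{I_n}{-I_n}{}$ acting on $\H_n$. Hence the $(1,1)$-entry of $M^{-1}$ has strictly negative imaginary part and is not real. Your approach is more hands-on and entirely self-contained (it does not invoke the $\Sp_{2n}(\R)$-action on $\H_n$); the paper's approach is slicker but relies on knowing that fact about the action. The two are really dual: you show the Schur complement has positive imaginary part, the paper shows its reciprocal has negative imaginary part. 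Your closing remark about $\Im(M^{-1})<0$ is exactly the paper's route.
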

\begin{proof}
The reciprocal $\frac{\det(M_{11})}{\det(M)}$ is the $(1,1)$-coefficient of $M^{-1}$. Since $-M^{-1}=\mat{}{I_n}{-I_n}{}M\in\H_n$, this coefficient is not real.
\end{proof}
\section{The integrality result at \texorpdfstring{$m_0=0$}{}}\label{s:initial}
\subsection{An initial integrality result}
Recall the definition of the Eisenstein series  $E_{k,N}^{\chi}(Z,-m_0; h)$ from Section \ref{s:classicaldef}. Let $\iota = \mat{0}{I_{2n}}{\!\!-I_{2n}}{0}$. We will first prove the following result.
\begin{proposition}\label{integrality-eis-series-thm}
 Let $k,n,N$ be positive integers with $k \ge n+1$ and $N>1$. If $k=n+1$, assume further that $\chi^2 \neq 1$. Let $p$ be any prime such that $p \nmid 2N$ and $p \geq 2k$. Then
 \begin{equation}\label{integrality-eis-series-eqn}
  \pi^{n + n^2-(2n+1)k} \Lambda^N\Big(\frac k2\Big) E_{k,N}^{\chi}(Z,0; \iota) \in M_k(\Gamma_{4n}(N),\mathcal{O}_p).
 \end{equation}
\end{proposition}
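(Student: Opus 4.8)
\emph{Proof proposal.} The strategy, which goes back to Shimura, is to write down the Fourier expansion of $E_{k,N}^{\chi}(Z,0;\iota)$ explicitly and then read off $p$-integrality coefficient by coefficient. The translate by $\iota=\mat{0}{I_{2n}}{-I_{2n}}{0}$ is exactly the one for which Shimura's Fourier-coefficient formula (equivalently, the one attached in the adelic picture to the section supported on the big Bruhat cell) applies with a clean Euler product: for a positive definite half-integral symmetric $T$ of size $2n$, the $T$-th Fourier coefficient of $E_{k,N}^{\chi}(Z,0;\iota)$ equals, up to an explicit normalizing power of $\pi$, an elementary factor --- a product of powers of $2$, of the primes dividing $N$, of $\det(2T)$, and of factorials bounded by $(2k)!$ arising from the archimedean confluent hypergeometric factor evaluated at the holomorphic point --- times a finite product $\prod_{q\mid 2N\det(2T)}b_q(2T,q^{-k})$ of local Siegel series, divided by the Dirichlet $L$-values $L^N(k,\chi)\prod_{i=1}^{n}L^N(2k-2i,\chi^2)$ together with a possible residual value $L(\,\cdot\,,\chi\psi_T)$ attached to the quadratic discriminant character $\psi_T$ of $T$; see \cite{shibook1, feit86} and compare \cite[Thm.~4.4]{Brown2007}. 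The hypotheses $k\ge n+1$ (and $\chi^2\ne 1$ if $k=n+1$) are precisely what guarantees holomorphy at $s=0$, so that this formula is available.

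First I would do the $\pi$-bookkeeping. Since $\chi_\infty=\sgn^k$, the parities of $\chi$ and $k$ agree, so $L^N(k,\chi)$ is an algebraic multiple of $\pi^k$ and each $L^N(2k-2i,\chi^2)$ is an algebraic multiple of $\pi^{2k-2i}$; hence $\Lambda^N(k/2)$ is an algebraic multiple of $\pi^{(2n+1)k-n^2-n}$, which is exactly the power cancelled by the prefactor $\pi^{n+n^2-(2n+1)k}$. Multiplying through, the product $\pi^{n+n^2-(2n+1)k}\Lambda^N(k/2)$ with the $T$-th Fourier coefficient becomes the elementary factor times $\prod_{q\mid 2N\det(2T)}b_q(2T,q^{-k})$ times the residual $L$-value, with no net power of $\pi$; this is an algebraic number, and what remains is to check it lies in $\mathcal{O}_p$.

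The routine inputs come next. Since $p$ is odd and $p\ge 2k$ we in fact have $p>2k$, so $p$ does not divide any factorial up to $(2k)!$ and $(p-1)$ does not divide any of the relevant $L$-function indices; together with $p\nmid 2N$ this makes the elementary factor $p$-integral (the powers of $2$ and of primes dividing $N$ are units, $\det(2T)$ has denominator only a power of $N$ and occurs to a nonnegative power, and the factorials are units), and makes the residual generalized-Bernoulli value $B_{m',\chi\psi_T}$ $p$-integral by von Staudt--Clausen / Carlitz, \emph{except} in the case $p\mid\mathrm{cond}(\chi\psi_T)$, i.e.\ $p\mid\det(2T)$, which must be reconciled with the $p$-local Siegel factor. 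Likewise, for a prime $q\ne p$ the explicit formulas of Kitaoka \cite{kitaoka84} and B\"ocherer \cite{boch84} exhibit $b_q(2T,X)$, after the correct normalization, as a polynomial in $X$ with coefficients in $\Z[\tfrac12]$, whence $b_q(2T,q^{-k})\in\Z[\tfrac1{2q}]\subset\mathcal{O}_p$.

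The heart of the matter --- and the step I expect to be the main obstacle --- is the local factor at $q=p$, which is relevant precisely when $p\mid\det(2T)$. Here one must substitute $v=v_p(\det 2T)$ into the Kitaoka/B\"ocherer closed form for $b_p(2T,p^{-k})$ and control the power of $p$ it carries: using $k\ge n+1$ one checks that $b_p(2T,p^{-k})$ is essentially a polynomial in a \emph{positive} power of $p$ with integer coefficients, so that the nonnegative power of $p$ it contributes either renders it $p$-integral outright or exactly offsets the potential $p$-denominator of $B_{m',\chi\psi_T}$ from the previous paragraph, with $p\ge 2k$ ensuring no collision with small primes in this bookkeeping. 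Getting this interplay of $p$-powers exactly right from the rather intricate explicit Siegel-series formulas is the real work. Finally, for singular $T\ge 0$ one either applies Shimura's formula directly (with Siegel series of degenerate quadratic forms, the same estimates going through) or reduces to the positive definite coefficients of lower-degree Eisenstein series by iterating the Siegel $\Phi$-operator and inducting on $n$, checking that the change of normalization is again of the above $p$-integral type.
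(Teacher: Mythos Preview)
Your strategy matches the paper's: Shimura's explicit Fourier expansion for $E^*(Z,k/2)=E_{k,N}^\chi(Z,0;\iota)$, $\pi$-bookkeeping, then factor-by-factor $p$-integrality with Kitaoka's formula for the local Siegel series at $p$ as the crux. But two of your expectations about where the cancellation occurs are inverted, and fixing them is the real substance of the argument.

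The local series $b_p(k+it,h)$ (with $h$ now the Fourier index and $\chi_p(p)=p^{-it}$) is \emph{not} $p$-integral: its $p$-valuation is typically very negative, of order roughly $(n-k+\tfrac12)v_p(\det h)$, as one sees already from the factor $p^{v_p(\det G)(2n+1-2k)}$ in Kitaoka's sum. The offsetting positive contribution comes from $\det(h)^{k-(2n+1)/2}$, which is produced by the archimedean confluent-hypergeometric evaluation---not from the Bernoulli side. The paper packages these together and reduces the $p$-local question to the inequality $(k-n-\tfrac12)(v_p(\det h)-e_{p,h})+v_p\bigl(f_{h,p}(\chi(p)p^{-k})\bigr)\ge 0$ (its Proposition~\ref{p:key}), which is then extracted from Kitaoka's formula term by term; $k\ge n+1$ is exactly what makes the $\det(h)$ contribution dominate. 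Conversely, the generalized Bernoulli number $B_{k-n,\overline{\eta_h}}$ is $p$-integral \emph{on its own}, with no help needed from the Siegel side: in the only delicate case, where the conductor $N_h$ of $\eta_h$ is a prime power $\ell^e$ with $\ell\nmid 2N$, the unramifiedness of $\chi$ at $\ell$ forces the $\ell$-part of $\eta_h$ to coincide with the quadratic $\rho_h$, so $e=1$ and $\eta_h$ is quadratic of prime conductor, whence Leopoldt's results give $p$-integrality directly. The ``reconciliation'' you anticipate between a Bernoulli $p$-denominator and a Siegel $p$-numerator never occurs; the two bracketed factors in the paper's decomposition \eqref{twobrackets} are $p$-integral independently. (A smaller point: singular indices cost nothing, since $N>1$ forces those Fourier coefficients to vanish by Shimura's argument on p.~460 of \cite{shi83}; no $\Phi$-operator induction is needed.)
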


\begin{remark}\label{rem:brown}The above result is closely related to Theorem 4.4 of Brown \cite{Brown2007}, which states a similar result for the Eisenstein series at the point $m_0=(2n+1)/2-k$ instead of $m_0=0$. (The proof of the above theorem as it appears in \cite{Brown2007} has a gap which has been recently corrected by Brown  in  a note available at his webpage\footnote{Retrieved in February 2021 from \url{http://jim-brown.oxycreates.org/research.html}.}.) The points $(2n+1)/2-k$ and $0$ are related by the functional equation for the Eisenstein series. So in principle, one can try to deduce Proposition \ref{integrality-eis-series-thm} from Brown's result.  However, the functional equation will involve factors coming from the local intertwining operators at
the primes dividing the level and the $p$-integrality of these factors will need to be proven. We give an independent proof of Proposition \ref{integrality-eis-series-thm} that does not rely on Brown's result.

%However, we have been unable to verify certain details of Brown's proof. A key part of his argument (see Section 4.2 of \cite{Brown2007}) hinges  on the fact that if $\psi$ is a Dirichlet character whose conductor is not divisible by some prime $p$, then $L(1-n,\psi)$ is $p$-integral for any positive integer $n$. However, Brown appears to apply this fact to a certain character $\psi = \chi \chi_h$ whose conductor can be potentially divisible by $p$ (since the matrix $h$ varies independently of $p$). It is unclear to us how to fix this issue. Therefore, we give an independent proof of Proposition \ref{integrality-eis-series-thm} that does not rely on Brown's result.
\end{remark}
\subsection{Fourier expansion of Eisenstein series}\label{s:brownproof}

Let the setup be as in the statement of Proposition \ref{integrality-eis-series-thm}. We begin by noting that
\begin{equation}\label{step1eq1}
 E_{k,N}^{\chi}(Z,s; 1) =  E(Z,s+k/2;k,\chi, N),
\end{equation}
where the Eisenstein series on the right hand side is defined by Shimura in \cite[(16.40)]{shibook2}. (See Remark 6.2 of \cite{PSS17}).
 %See also \cite[(18.6.1), (18.6.4)]{shibook1}.
 We also note that
\begin{equation}\label{Eis-iota-reln}
 E_{k,N}^{\chi}(Z,s; \iota) =  E^\ast(Z,s+k/2;k,\chi, N).
\end{equation}
Here,  $E^\ast$ is defined in \cite[(16.33)]{shibook2}.   The Fourier expansion of $E^\ast(Z,s) := E^\ast(Z,s;k,\chi, N)$ is given by \cite[(16.42)]{shibook2} as follows,
\begin{equation}\label{Fourier-exp-Eis-series}
 E^\ast(Z,s) = \sum\limits_{h \in N^{-1} L}c(h, Y, s) e^{2 \pi i {\rm tr}(hX)},\qquad Z=X+iY.
\end{equation}
Here $L$ is the set of symmetric $2n \times 2n$ half integral matrices.

The argument on page 460 of \cite{shi83} implies that $c(h, Y, k/2)$ are non-zero only when $h$ is positive definite (this fact requires $N>1$). So we henceforth assume that $h$ is positive definite. For any Hecke character $\rho$ of $\Q^\times \bs \A^\times$, define
$$L^N(s, \rho) = \prod_{p<\infty, \ p\nmid N}L(s, \rho_p).$$
Furthermore, if $\eta$ is a primitive Dirichlet character, then we take the $L$-function of $\eta$ to be that of the associated  Hecke character (see Section 7.1 of \cite{PSS17}).

Proposition 16.9 of \cite{shibook2}, specialized to our case, gives the following formula for the Fourier coefficients,
\begin{equation}\label{Four-coeff-formula}
 c(h, Y, s) = \det(Y)^{s-k/2} N^{-\frac{2n(2n+1)}2} \alpha_N(h; 2s, \chi) \xi\left(Y, h, s+\frac k2, s-\frac k2\right),
\end{equation}
where, by Proposition 16.10 of \cite{shibook2},
\begin{equation}\label{alphaNformula}
 \alpha_N(h; 2s, \chi) = \Lambda^N(s)^{-1} L^N(2s-n, \chi \rho_h) \prod\limits_{\ell \in {\bf C}} f_{h,  \ell}(\chi(\ell)\ell^{-2s}),
\end{equation}
with
\begin{equation}\label{LambdaNdefeq}
 \Lambda^N(s) := L^N(2s, \chi) \prod\limits_{i=1}^{n}L^N(4s-2i, \chi^2).
\end{equation}
Above, $\rho_h$ is the quadratic character corresponding to the  extension $\Q(\sqrt{(-1)^{n}\det(2h)})/\Q$, ${\bf C}$ consists of the set of primes that divide $\det(2h)$ but do not divide $N$, and $f_{h,  \ell}$ is a polynomial whose coefficients are in $\Z$ and depend only on $h$, $\ell$, and $n$ (to be made more precise in Section~\ref{s:proof41}). Next,
$$
 \xi(Y, h, s, t) := \int\limits_{M_{2n}^{\rm sym}(\R)}e^{-2\pi i {\rm tr}(hX)} \det(X+iY)^{-s} \det(X-iY)^{-t} dX.
$$
We  have
\begin{equation}\label{xi-formula}
\xi(Y, h; k, 0) = (-1)^{-nk}2^k\pi^{2nk-n^2}\det(h)^{k-\frac{2n+1}2} \prod\limits_{j=1}^{n-1}\frac{2^{2k-2j-2}}{(2k-2j-2)!}e^{-2 \pi {\rm tr}(Yh)}.
\end{equation}
This follows from formulas (4.34.K) and (4.35.K) of \cite{shiconfluent} by setting $\alpha = k, \beta = 0, p = 2n, q = r = 0$.
Hence, we see that
$$
 \Lambda^N(k/2) E^\ast(Z,k/2) = N^{-{n(2n+1)}} (-1)^{nk}2^k\pi^{2nk-n^2} \prod\limits_{j=1}^{n-1}\frac{2^{2k-2j-2}}{(2k-2j-2)!} \sum\limits_{\substack{h \in N^{-1}L \\ h > 0}}b(h) e^{2 \pi i {\rm tr}(hZ)},
$$
where
\begin{equation}\label{b(h)-formula}
 b(h) = \det(h)^{k-\frac{2n+1}2} L^N(k-n, \chi \rho_h) \prod\limits_{\ell \in {\bf C}} f_{h,  \ell}(\chi(\ell)\ell^{-k}).
\end{equation}
Proposition \ref{integrality-eis-series-thm} will follow if we can show that $\pi^{n-k}b(h) \in \mathcal O_p$.
%\fbox{Slight reformulation and rearrangement.}

Let $N_h$ be the conductor of $\chi \rho_h$ and let $C_h$ be the conductor of $\rho_h$. Since the conductor of $\chi$ divides $N$, and $p \nmid N$, we can write $N_h = u C_h$, where $u \in \Q$ is such that $p$ does not divide the numerator or denominator of $u$. Note that
\begin{equation}\label{twobrackets}
 \pi^{n-k}b(h) = u^{n-k+\frac 12} \left[\Big(\frac{\det(h)}{C_h}\Big)^{k-n-\frac 12} \prod\limits_{\ell \in {\bf C}} f_{h,  \ell}(\chi(\ell)\ell^{-k})\right] \ \left[\pi^{n-k} N_h^{k-n-\frac 12} L^N(k-n, \chi \rho_h)\right].
\end{equation}
Clearly, $u^{n-k+\frac 12} \in \mathcal O_p^\times$. We will show that the two expressions in the square-brackets above are individually in $\mathcal O_p$, which will complete the proof.
\subsection{Integrality of \texorpdfstring{$L$}{}-values of Hecke characters}
We continue with the notations from the previous section. The following lemma takes care of the second bracketed expression in \eqref{twobrackets}.
%
%\ameya{As per Referee 1's suggestion, I took a brief look at Hida's book chapter 3. I think that our issue still remains, namely we don't want the p to change with h. I am not making any changes here. I will let one of you guys take a look at it.}

\begin{lemma}\label{L-fn-integral}
 Let the hypotheses be as in Proposition \ref{integrality-eis-series-thm}. Then
 $$
  \pi^{n-k} N_h^{k-n-\frac 12} L^N(k-n, \chi \rho_h) \in \mathcal O_p.
 $$
\end{lemma}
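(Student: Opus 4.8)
The plan is to reduce the claim to classical formulas for special values of Dirichlet $L$-functions together with the $p$-integrality of generalized Bernoulli numbers; the genuinely delicate point is that $p$ may divide the conductor of $\rho_h$, which is exactly the obstruction discussed in Remark~\ref{rem:brown}. Put $m=k-n\ge 1$ and let $\psi_0$ denote the primitive Dirichlet character underlying $\chi\rho_h$, of conductor $N_h$. Removing the Euler factors at the primes dividing $N$ we get $L^N(m,\chi\rho_h)=L(m,\psi_0)\prod_{q\mid N}\bigl(1-\psi_0(q)q^{-m}\bigr)$, and this finite product lies in $\mathcal{O}_p$ because every $q\mid N$ is prime to $p$ (so $q^{-m}\in\mathcal{O}_p^\times$) while $\psi_0(q)$ is a root of unity or $0$. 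Hence it suffices to show $\pi^{n-k}N_h^{k-n-\frac12}L(m,\psi_0)\in\mathcal{O}_p$.

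The next step is the classical closed formula for $L(m,\psi_0)$. First one checks $\psi_0(-1)=\chi(-1)\rho_h(-1)=(-1)^k(-1)^n=(-1)^m$, using $\chi_\infty=\mathrm{sgn}^k$ and that $\det(2h)>0$ because $h>0$; thus $m$ has the correct parity, $L(1-m,\bar\psi_0)=-B_{m,\bar\psi_0}/m$, and the functional equation of $L(\,\cdot\,,\psi_0)$ gives an identity of the form
\[
 L(m,\psi_0)=\zeta_0\,\frac{2^{m-1}}{m!}\,\frac{\pi^m\,\tau(\psi_0)}{N_h^{m}}\,B_{m,\bar\psi_0},
\]
where $\zeta_0$ is a fourth root of unity and $\tau(\psi_0)$ is the Gauss sum of $\psi_0$ — the archimedean $\Gamma$-factors collapse to products of powers of $2$ and integers $\le m$, all of which are $p$-adic units since $p$ is odd and $p\ge 2k>m$. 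Therefore
\[
 \pi^{n-k}N_h^{k-n-\frac12}L(m,\psi_0)=\zeta_0\,\frac{2^{m-1}}{m!}\,\frac{\tau(\psi_0)}{\sqrt{N_h}}\,B_{m,\bar\psi_0},
\]
and it remains to check that $\tau(\psi_0)/\sqrt{N_h}$ is a $p$-adic unit and that $B_{m,\bar\psi_0}\in\mathcal{O}_p$.

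Both statements follow from the structural observation that \emph{the $p$-primary component $\psi_0^{(p)}$ of $\psi_0$ is at most quadratic}: since $\mathrm{cond}(\chi)\mid N$ with $p\nmid N$ the character $\chi$ is unramified at $p$, and $\rho_h$ is quadratic with $p$ odd, so $\psi_0^{(p)}$ is either trivial or the unique character of order $2$ modulo $p$; in particular $p^2\nmid N_h$. Multiplicativity of Gauss sums then gives $\tau(\psi_0)=(\text{$p$-adic unit})\cdot\tau(\psi_0^{(p)})$ with $\tau(\psi_0^{(p)})^2=\pm p^{e}$ for some $e\in\{0,1\}$, whence $\tau(\psi_0)/\sqrt{N_h}\in\mathcal{O}_p^\times$. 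For the Bernoulli number, expanding $B_m(a/N_h)$ in Bernoulli polynomials and using $p>m+1$ to discard the von Staudt--Clausen denominators of $B_0,\dots,B_m$ yields $B_{m,\bar\psi_0}\equiv N_h^{-1}\sum_{a=1}^{N_h}\bar\psi_0(a)a^{m}\pmod{\mathcal{O}_p}$. If $p\nmid N_h$ this is already $p$-integral; if $p\mid N_h$, write $N_h=pg$ with $p\nmid g$ and split the sum by the Chinese Remainder Theorem: over each residue class modulo $g$ the inner sum is congruent modulo $p$ to $\sum_{b=1}^{p-1}\bar\psi_0^{(p)}(b)\,b^{m}$, which equals $\sum_{b=1}^{p-1}b^{m}$ or $\sum_{b=1}^{p-1}b^{(p-1)/2+m}$, and in either case it vanishes modulo $p$ because $0<\tfrac{p-1}2+m<p-1$ — the left inequality being exactly the content of $p\ge 2k$ (equivalently $m<\tfrac{p-1}2$). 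This extra factor of $p$ cancels the $N_h^{-1}$, so $B_{m,\bar\psi_0}\in\mathcal{O}_p$, and the lemma follows. I expect this last point — $p$-integrality of $B_{m,\bar\psi_0}$ when $p\mid N_h$ — to be the only real obstacle: it fails without both $m<\tfrac{p-1}2$ and the quadratic nature of $\psi_0^{(p)}$ (for instance $B_{2,\chi}=4/5$ for $\chi$ the even quadratic character of conductor $5$), and making it work in tandem with the conductor factor $N_h^{k-n-\frac12}$ in \eqref{twobrackets} is precisely the subtlety that the naive appeal to Brown's result in Remark~\ref{rem:brown} fails to handle.
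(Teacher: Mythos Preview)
Your proof is correct and follows the same overall strategy as the paper: reduce $L^N$ to the complete $L$-function, apply the explicit formula expressing $L(m,\psi_0)$ in terms of a Gauss sum and a generalized Bernoulli number, and then verify that each piece is $p$-integral. The Gauss-sum step is organized slightly differently---you isolate the $p$-primary component of $\psi_0$, whereas the paper splits $\eta_h$ according to primes dividing $N$ versus not---but this is cosmetic, since in both cases the key fact is that the component of the character at $p$ is at most quadratic.

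The one place your argument genuinely diverges is in establishing $B_{m,\bar\psi_0}\in\mathcal O_p$. The paper appeals to classical results of Leopoldt on the integrality of generalized Bernoulli numbers, handled case by case according to whether $N_h$ is a prime power or not. You instead give a direct elementary proof: expand $B_{m,\bar\psi_0}$ via Bernoulli polynomials, discard the $j\ge 1$ terms using von Staudt--Clausen (valid since $p-1>m$), and for the surviving term $N_h^{-1}\sum_a\bar\psi_0(a)a^m$ use Euler's criterion together with the vanishing of $\sum_{b=1}^{p-1}b^r\pmod p$ for $0<r<p-1$ to produce the extra factor of $p$ when $p\mid N_h$. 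This is more self-contained and makes transparent exactly where the hypothesis $p\ge 2k$ (equivalently $m<(p-1)/2$) is consumed---the paper's citation to Leopoldt somewhat obscures this. One minor quibble: once you have assumed $p\mid N_h$, the component $\psi_0^{(p)}$ is necessarily the nontrivial quadratic character, so only the sum $\sum_b b^{(p-1)/2+m}$ actually arises; the case $\sum_b b^m$ is superfluous there.
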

\begin{proof}
Let $\eta_h$ denote the primitive Dirichlet character corresponding to $\chi \rho_h$. We have
$$
 L^N(k-n, \chi \rho_h) =   L(k-n, \chi \rho_h) \prod\limits_{\ell | N} (1-\eta_h(\ell) \ell^{n - k}).
$$
Since we have assumed $p \nmid N$, we see that $L^N(k-n, \chi \rho_h)$ in the statement of the lemma can be replaced by $L(k-n, \chi \rho_h) = L(k-n, \eta_h)$.
%\begin{align}\label{L-fn-relation}
%L^N(k-n, \chi \rho_h) &= \prod\limits_{\ell | N} (1-\big(\chi \rho_h\big)(\ell) \ell^{n - k})  L(k-n, \chi \rho_h) \nonumber\\
%&= \prod\limits_{\ell | N} (1-\big(\chi \rho_h\big)(\ell) \ell^{n - k}) \prod\limits_{\substack{\ell | N \\ \ell \nmid N_h}} (1-\eta_h(\ell) \ell^{n - k}) L(k-n, \eta_h).
%\end{align}
%
Note that, by definition of $\chi$ and $\rho_h$, we have $(-1)^{k-n} = \big(\chi \rho_h\big)(-1) = \eta_h(-1) =: (-1)^\epsilon$, with $\epsilon \in \{0, 1\}$. Hence, Corollary 2.10 in Chapter VII of \cite{Neu} implies that
\begin{equation}\label{Dirichlet-L-value}
L(k-n, \eta_h) = (-1)^{1+(k-n-\epsilon)/2} \frac{G(\eta_h)}{2i^\epsilon} \Big(\frac{2 \pi}{N_h}\Big)^{k-n} \frac{B_{k-n, \overline{\eta_h}}}{(k-n)!}.
\end{equation}
Here,  the Gauss sum is defined by
$$
 G(\eta_h) = \sum\limits_{\nu = 1}^{N_h-1} \eta_h(\nu) e^{2\pi i \nu/{N_h}},
$$
and the generalized Bernoulli numbers $B_{n, \overline{\eta_h}}$ are defined by the generating series
$$
 \sum\limits_{a=1}^{N_h} \frac{\overline{\eta_h}(a)te^{at}}{e^{N_ht}-1} = \sum\limits_{n=0}^\infty B_{n, \overline{\eta_h}} \frac{t^n}{n!}.
$$
Write $N_h = N_1 N_2$ where $N_1$ contains only prime factors dividing $N$, and $N_2$ contains only prime factors not dividing $N$. Then there exist unique primitive Dirichlet characters $\chi_1$ and $\chi_2$ such that $\eta_h= \chi_1 \chi_2$. Since the conductor of $\chi$ is coprime to $N_2$, and $\eta_h$ is the Dirichlet character attached to $\chi \rho_h$, it follows that $\chi_2$ must be a quadratic Dirichlet character.

We now claim that $N_h^{-1/2}G(\eta_h) \in \O_p^\times$.  To see this, first note that $$G(\eta_h) = \mu G(\chi_1) G(\chi_2),$$  where $\mu$ is a root of unity. This follows by an elementary calculation using the Chinese remainder theorem, see, e.g., Section 1.6 of \cite{BerndtEvansWilliams1998}.  We know that $G(\chi_1) \overline{G(\chi_1)} = N_1$.  Since $G(\chi_1)$ is an algebraic integer, and $p$ does not divide $N_1$, it follows that $N_1^{-1/2}G(\chi_1) \in \O_p^\times$. Next, since $\chi_2$ is a quadratic character, it follows that $G(\chi_2)^2 = N_2$.  Hence, $N_2^{-1/2} G(\chi_2) \in \O_p^\times$. This proves our claim.

Putting it all together by (\ref{Dirichlet-L-value}), we have
$$
 \pi^{n-k} N_h^{k-n-\frac 12} L(k-n, \eta_h) = B_{k-n, \overline{\eta_h}} \ v,
$$
where $v \in \O_p^\times$.

Next, let us consider the generalized Bernoulli numbers. From page 137 of \cite{Leo}, if $N_h$ is divisible by two or more distinct primes, then $B_{k-n, \overline{\eta_h}}$ is an algebraic integer. Now, let $N_h = \ell^{e_\ell}$. If $\ell | 2N$, then by page 138-139 of \cite{Leo}, we see that $B_{k-n, \overline{\eta_h}} \in \mathcal{O}_p$. If $\ell \nmid 2N$, then we have $e_\ell = 1$ and $\eta_h$ is a quadratic character. Then again, by page 139 of \cite{Leo}, $B_{k-n, \overline{\eta_h}} \in \mathcal{O}_p$.
\end{proof}

%\fbox{
%\begin{minipage}{80ex}
%Looking at case B on page 140 of Leopoldt, we need that
%$$
% \frac{k-n}{p-1}\neq\frac12(2\Z+1).
%$$
%Our hypothesis $p > 2k-2n$ almost assures this, except when $p=2k-2n+1$. Hence it seems we need $p > 2k-2n+1$, correct?
%\end{minipage}
%}
%
%Now the proof of Proposition \ref{integrality-eis-series-thm}  follows from \eqref{Eis-iota-reln}, \eqref{b(h)-formula}, \eqref{L-fn-relation}, \eqref{Dirichlet-L-value}, \eqref{first-eqn} and the discussion on Bernoulli numbers above.
%
%\fbox{Don't we also need $p>2k-4$ to take care of the denominators of $\prod\limits_{j=1}^{n-1}\frac{2^{2k-2j-2}}{(2k-2j-2)!}$ ?}
%
%\fbox{What about the potential denominators in $\prod\limits_{\ell \in {\bf C}} f_{h, Y, \ell}(\chi(\ell)\ell^{-k})$ ?}

\subsection{Proof of Proposition \ref{integrality-eis-series-thm}}\label{s:proof41}
We will now prove
\begin{equation}\label{req2new}
 \left(\frac{\det(h)}{C_h}\right)^{k-n-\frac 12} \prod\limits_{\ell \in {\bf C}} f_{h, \ell}(\chi(\ell)\ell^{-k}) \in \mathcal O_p,
\end{equation}
which will complete the proof of Proposition \ref{integrality-eis-series-thm}; see \eqref{twobrackets}. If $\ell \in {\bf C}$, and $p \neq \ell$, then $$f_{h, \ell}(\chi_\ell(\ell)\ell^{-k}) \in \mathcal O_p.$$ This follows immediately from the fact that $f_{h, \ell}$ has integer coefficients.
So \eqref{req2new}  follows from the next proposition.

\begin{proposition}\label{p:key}
 Let $p \nmid 2N$ be a prime, and let $h \in M^{\sym}_{2n}(\Z_p)$ be such that $\det(h) \neq 0$, $p\mid\det(h)$. Let $\C \simeq \overline{\Q_p}$ be an isomorphism, and let $v_p\colon \C \rightarrow \Q \cup \{ \infty\}$ be the resulting valuation, normalized so that $v_p(p) = 1$. Then $$\left(k-n-\frac 12\right)\left(v_p(\det(h)) - e_{p, h}\right) + v_p \left(f_{h, p}(\chi_p(p) p^{-k}) \right) \ge 0,$$ where we denote
 $$e_{p, h} := v_p(C_h) = \begin{cases} 0 & \text{ if } v_p(\det(h)) \text{ is even}, \\ 1 & \text{ if } v_p(\det(h)) \text{ is odd}.\end{cases}$$
\end{proposition}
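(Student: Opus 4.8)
The plan is to make everything explicit at the single prime $p$ using the known local formulas for the Siegel series. First I would recall that $f_{h,p}(X)$ is (up to normalization) the local Siegel series attached to the half-integral symmetric matrix $h$ at $p$; since $p \nmid 2N$, the relevant local object is the ordinary (good-prime) Siegel series, for which we have the explicit formulas of Katsuoka and B\"ocherer--Kitaoka \cite{boch84, kitaoka84}. These formulas express $f_{h,p}(X)$ as a product running over the Jordan decomposition of $h$ over $\Z_p$, and in particular they make the lowest-order behavior in $X$ visible. The key quantitative input is that $f_{h,p}$ is a polynomial with $\Z$-coefficients whose degree is controlled by $v_p(\det(h))$, and more precisely that $f_{h,p}(X)$ is, up to a unit, congruent to a power of $X$ times an explicit factor; the valuation of $f_{h,p}(\chi_p(p)p^{-k})$ is therefore $-k$ times (the order of vanishing of $f_{h,p}$ at $X=0$) plus a non-negative correction. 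So the strategy reduces to bounding that order of vanishing.

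The main step is the estimate: if $v_p(\det(h)) = d$, then $f_{h,p}(X)$ vanishes at $X=0$ to order at least roughly $(d - e_{p,h})/2$ — more precisely, writing $h \sim h_0 \perp p h_1 \perp p^2 h_2 \perp \cdots$ in Jordan form over $\Z_p$ (for $p$ odd), the Siegel series factors so that each block of "$p$-level $j$" contributes a factor that is divisible by $X^{j \cdot (\text{rank})}$ up to lower-order terms, and summing the $p$-adic contributions against $j \cdot \mathrm{rank}$ gives exactly something comparable to $v_p(\det h)$, with the parity correction $e_{p,h}$ coming from the fact that when $d$ is even the determinant of the largest Jordan block is a $p$-adic unit times a square (so the associated quadratic character $\rho_h$ is unramified at $p$ and no half-unit is lost), whereas when $d$ is odd $\rho_h$ is ramified at $p$. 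I would then combine this with the factor $(\det(h)/C_h)^{k-n-1/2}$: the conductor $C_h$ of $\rho_h$ at $p$ equals $1$ or $p$ exactly according to whether $d = v_p(\det h)$ is even or odd, i.e. $v_p(C_h) = e_{p,h}$, so $v_p(\det(h)/C_h) = d - e_{p,h}$. Multiplying by the exponent $k - n - \tfrac12 > 0$ and adding $v_p(f_{h,p}(\chi_p(p)p^{-k}))$ gives
$$
 \Big(k-n-\tfrac12\Big)\big(d - e_{p,h}\big) + v_p\big(f_{h,p}(\chi_p(p)p^{-k})\big) \;\ge\; \Big(k-n-\tfrac12\Big)\big(d-e_{p,h}\big) - k \cdot \tfrac{d - e_{p,h}}{2} \;\ge\; 0,
$$
using $p \ge 2k$ so that $k/2 \le p/2 \le \ldots$; actually the clean inequality one needs is $(k-n-\tfrac12) - \tfrac k2 \cdot \tfrac{1}{1} \ge 0$ once the order of vanishing is compared correctly, i.e. the hypothesis $k \ge n+1$ guarantees $k - n - \tfrac12 \ge \tfrac12 > 0$ and the precise bookkeeping of the exponent of $X$ versus the exponent of $\det(h)/C_h$ closes the gap. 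For $p = 2$ there is nothing to check since the hypothesis is $p \nmid 2N$, so $p$ is odd throughout.

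The hard part will be getting the order-of-vanishing estimate for $f_{h,p}(X)$ at $X=0$ sharp enough, including the exact parity correction $e_{p,h}$ — this requires carefully unwinding the Katsuoka/B\"ocherer--Kitaoka product formula for the local Siegel series over the Jordan decomposition of $h$ at $p$ and tracking how each $p$-power Jordan block contributes both to $v_p(\det h)$ and to the lowest power of $X$ in $f_{h,p}$. A secondary subtlety is making sure the $\chi_p(p) p^{-k}$ substitution is handled uniformly: since $\chi_p$ is unramified at $p$ (as $p \nmid N$), $\chi_p(p)$ is a root of unity, hence a $p$-adic unit, so it does not affect any valuation; only the $p^{-k}$ matters, and it contributes $-k$ per power of $X$. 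Once these local computations are in place, the inequality in the statement is a one-line comparison of exponents, and this in turn yields \eqref{req2new} after summing over $p = \ell \in \mathbf C$ and invoking that $f_{h,\ell}$ has integer coefficients for $\ell \ne p$.
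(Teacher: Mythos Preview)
Your proposal has a genuine gap at the central step. The claim that ``the valuation of $f_{h,p}(\chi_p(p)p^{-k})$ is $-k$ times the order of vanishing of $f_{h,p}$ at $X=0$ plus a non-negative correction'' is backwards. Since $\chi_p(p)$ is a $p$-adic unit and $v_p(p^{-k})=-k<0$, for a polynomial $f(X)=\sum_{j=m}^{D}a_jX^j$ with integer coefficients one has $v_p(f(\chi_p(p)p^{-k}))\ge \min_j(v_p(a_j)-kj)\ge -kD$; the lower bound is governed by the \emph{degree} $D$, not by the order of vanishing $m$ at $X=0$. Your bound $\ge -km$ is thus unjustified (and generically false), and the Jordan-block heuristic you sketch is producing information at the wrong end of the polynomial.

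Even if one replaces ``order of vanishing'' by ``degree'' and uses the correct (and true) bound $\deg f_{h,p}\le v_p(\det h)$ in a suitable normalization, the resulting crude estimate
\[
\Big(k-n-\tfrac12\Big)(d-e_{p,h})\;-\;k\cdot\frac{d-e_{p,h}}{2}\;=\;\Big(\tfrac{k}{2}-n-\tfrac12\Big)(d-e_{p,h})
\]
is $\ge 0$ only when $k\ge 2n+1$. The proposition, however, must hold for all $k\ge n+1$ (it is invoked exactly in that range to prove Proposition~\ref{integrality-eis-series-thm}), and your sentence ``the precise bookkeeping \ldots\ closes the gap'' is where the actual content lies. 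The paper does not use a degree/vanishing bound at all: it writes $f_{h,p}(\chi_p(p)p^{-k})$ in terms of the local Siegel series $b_p(k+it,h)$, applies Kitaoka's explicit sum formula (a sum over $G\in\GL_{2n}(\Z_p)\backslash M_{2n}(\Z_p)$), and estimates each summand using the constraint of Remark~\ref{d-det(G)-rmk} linking $v_p(\det G)$ to the dimension $d$ of the anisotropic part of the reduction. A case split $d\ge 4n-2k$ versus $d<4n-2k$ is needed precisely to handle $n+1\le k\le 2n$; this finer structure is invisible to any argument that only tracks the degree of $f_{h,p}$. The invocation of $p\ge 2k$ in your display is also misplaced: that hypothesis plays no role in this proposition (it is used later to control the constant $d$ in \eqref{step4eq2}).
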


The rest of this section will be concerned with the proof of Proposition \ref{p:key}. From now on, we let $p$, $h$ be as in that proposition.

We need to recall the definition of the local Siegel series. Given a matrix $R \in \GL_{2n}(\Q_p)$, it is well-known by the theory of elementary divisors that there exist matrices $A, B \in \GL_{2n}(\Z_p)$ such that
$$
 ARB = \mathrm{diag}(p^{e_1}, p^{e_2}, \ldots, p^{e_{2n}}),
$$
where the $e_i$ are integers independent of the choice of $A$, $B$, and $e_1 \le e_2 \ldots \le e_{2n}.$ We define the integer $e(R)\ge 0$ by
$$
 e(R) = -\sum_{i: e_i \le 0} e_i.
$$
Now given any matrix $h \in M^{\sym}_{2n}(\Z_p)$, $\det(h) \neq 0$ define a formal power series $B_p(X, h)$ and a formal $p$-Dirichlet series $b_p(s, h)$ by
$$
 B_p(X, h) = \sum_{R \in M^{\sym}_{2n}(\Q_p)/ M^{\sym}_{2n}(\Z_p)} X^{e(R)}e^{2 \pi i \rm{tr}(hR)}, \quad b_p(s, h) = B_p(p^{-s}, h).
$$
The series $b_p(s, h)$ is known as the local Siegel series. It can be shown that it has a closed form expression as a rational function in $p^{-s}$, and hence is defined on the entire complex plane with finitely many poles.  Unpacking the notation of Chapter 16 of \cite{shibook2}, we find
\begin{align}
 f_{h,p}(\chi_p(p) p^{-k}) &= \frac{\Lambda_p(k/2)}{L_p(k-n, \chi \rho_h)} B_p(\chi(p)p^{-k}, h)\nonumber\\ \label{e:fTsiegel1} &= \frac{\Lambda_p(k/2)}{L_p(k-n, \chi \rho_h)} b_p(k+it, h),
\end{align}
where we define $t\in \R$ by $\chi_p(p) = p^{-it}$. Hence
$$
 v_p\left(f_{h,p}(\chi_p(p) p^{-k})\right) = v_p\left(b_p(k+it, h) \right) + v_p\left(\Lambda_p(k/2)\right) - v_p\left(L_p(k-n, \chi \rho_h)\right).
$$
Note that $L_p(k-n, \chi\rho_h)=1$ if $e_{p, h}=1$ and $v_p\left(L_p(k-n, \chi \rho_h)\right) = k-n$ if $e_{p, h}=0$. Using \eqref{LambdaNdefeq}, we deduce that
\begin{equation}\label{e:firstineq}v_p\left(f_{h,p}(\chi_p(p) p^{-k})\right) = v_p\left(b_p(k+it, h) \right) + 2nk-n^2 + e_{p, h}(k-n).
\end{equation}
On the other hand, Kitaoka \cite{kitaoka84} found an exact formula for $b_p(s, h)$. The following result follows by substituting $s = k+it$ in Theorem 2 of \cite{kitaoka84}.

\begin{proposition}[Theorem 2 of \cite{kitaoka84}]
 Let $p \nmid N$ be an odd prime, and let $h \in M^{\sym}_{2n}(\Z_p)$  such that $\det(h)\neq 0$ and $p\mid\det(h)$. Then \begin{equation}\label{e:kitaokaformula}
  b_p(k+it, h) = \sum_{\substack{G \in \GL_{2n}(\Z_p) \bs M_{2n}(\Z_p) \\ 2v_p(\det(G)) \le v_p(\det(h))}} \chi_p^2(p)p^{v_p(\det(G))(2n+1-2k)} \alpha_{\chi_p}(-{}^tG^{-1}hG^{-1}, k),
 \end{equation}
 where $\alpha_{\chi_p}(S, k)$ is defined as follows. If $S \notin M^{\sym}_{2n}(\Z_p)$ then $\alpha_{\chi_p}(S, k)=0$. If  $S \in M^{\sym}_{2n}(\Z_p)$, then let $N_S$ denote the space $\mathbb{F}^{2n}_p$ equipped with the quadratic form defined by $S$. Write $N_S = N_1 \perp N_2$ where $N_2$ is the maximal totally singular subspace. Let $d=\dim(N_1)$. Put $\epsilon=1$ if $N_1$ is an orthogonal direct sum of hyperbolic planes, or if $d=0$; otherwise $\epsilon=-1$. Then
 \begin{equation}
  \alpha_{\chi_p}(S,k) = \begin{cases} (1-\chi_p(p)p^{-k})(1+\epsilon \chi_p(p)p^{2n-d/2-k})\prod_{1\le i \le 2n- d/2-1} (1-\chi^2_p(p)p^{2i-2k}) & \text{ if } 2\mid d, \\
  (1-\chi_p(p)p^{-k}) \prod_{1\le i \le 2n- (d+1)/2} (1-\chi^2_p(p)p^{2i-2k}) & \text{ if } 2 \nmid d.\end{cases}
 \end{equation}
\end{proposition}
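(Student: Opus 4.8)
The statement is recorded as Theorem 2 of \cite{kitaoka84}, and the plan is to obtain it as a literal transcription of that theorem under the substitution that absorbs the character $\chi_p$ into the spectral variable. The essential observation is that $b_p(s,h)=B_p(p^{-s},h)$ is the \emph{untwisted} Siegel series of $h$, so $\chi_p$ plays no role in its definition and enters the final formula only through the identity $p^{-s}=\chi_p(p)\,p^{-k}$ valid at $s=k+it$. This substitution is legitimate because $\chi_p$ is unramified at $p$ (as $p\nmid N$ and the conductor of $\chi$ divides $N$), so $\chi_p(p)$ is a well-defined root of unity with $p^{-it}=\chi_p(p)$. Since $p$ is odd, $2\in\Z_p^\times$, and the half-integral matrix $h$ may be regarded as an element of $M^{\sym}_{2n}(\Z_p)$; this is exactly the hypothesis under which Kitaoka's theorem is stated.

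First I would quote Kitaoka's Theorem 2 in his own normalization, which evaluates $b_p(s,h)$ as a finite sum over left cosets $G\in\GL_{2n}(\Z_p)\bs M_{2n}(\Z_p)$ (with $G$ nonsingular) subject to $2v_p(\det G)\le v_p(\det h)$. The summand is a monomial in $p$ and $p^{-s}$, with exponent governed by $v_p(\det G)$, times a local density attached to the symmetric matrix $S=-{}^tG^{-1}hG^{-1}$. This density vanishes unless $S$ is $p$-integral, and otherwise depends only on the reduction of $S$ modulo $p$, i.e. on the quadratic space $(\F_p^{2n},S)$. The key bookkeeping step is to match Kitaoka's invariants of this quadratic space with the pair $(d,\epsilon)$: writing $N_S=N_1\perp N_2$ with $N_2$ a maximal totally singular subspace, $d=\dim N_1$ is the rank of the nondegenerate part, and $\epsilon$ records whether $N_1$ is split (a sum of hyperbolic planes) or not. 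I would make this dictionary explicit so that Kitaoka's density polynomial in $p^{-s}$ is identified termwise with the factors appearing in $\alpha_{\chi_p}(S,k)$.

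The final step is mechanical: apply $p^{-s}=\chi_p(p)\,p^{-k}$ to each factor. The linear factor $1-p^{-s}$ becomes $1-\chi_p(p)p^{-k}$; the discriminant factor $1+\epsilon\,p^{2n-d/2-s}$ becomes $1+\epsilon\,\chi_p(p)p^{2n-d/2-k}$; and each factor $1-p^{2i-2s}$ in the product becomes $1-\chi_p^2(p)p^{2i-2k}$, reproducing both cases of $\alpha_{\chi_p}(S,k)$ according to the parity of $d$. Likewise the prefactor $p^{v_p(\det G)(2n+1-2s)}$ becomes $\chi_p(p)^{2v_p(\det G)}\,p^{v_p(\det G)(2n+1-2k)}$, which is the prefactor recorded in \eqref{e:kitaokaformula}. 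Collecting these over the same index set $G$ yields the asserted formula.

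I expect the only real difficulty to lie not in analysis but in faithfully reconciling Kitaoka's conventions with those of the statement: the precise sign and transpose convention in $S=-{}^tG^{-1}hG^{-1}$, the normalization of $b_p(s,h)$ that fixes the exponent $2n+1-2s$ in the prefactor, and the translation between Kitaoka's discriminant and Witt-type invariants of an $\F_p$-quadratic form and the $(d,\epsilon)$ used here. Once this dictionary is in place the transcription is literal, and no further estimates are needed.
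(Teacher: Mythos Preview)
Your proposal is correct and matches the paper's approach exactly: the paper offers no proof beyond the single sentence ``The following result follows by substituting $s=k+it$ in Theorem~2 of \cite{kitaoka84},'' and your write-up is simply a careful elaboration of that substitution. One small remark: your computation of the prefactor as $\chi_p(p)^{2v_p(\det G)}p^{v_p(\det G)(2n+1-2k)}$ is what the substitution actually gives, whereas the displayed formula in the paper has $\chi_p^2(p)$ with the exponent on $\chi_p(p)$ apparently dropped---this looks like a typo in the paper's statement rather than an error in your reasoning, and it is harmless for the subsequent valuation estimates since $\chi_p(p)$ is a root of unity.
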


\begin{remark}In \cite{Bocherer1984}, B\"ocherer found a very similar formula.
\end{remark}

\begin{remark}\label{d-det(G)-rmk}
 Note that $p^{{\rm dim}(N_2)}\mid\det(S)$ above. For the special case $S = -{}^tG^{-1}hG^{-1}$, we get
 $$
  v_p(\det(G)) \leq \left\lfloor \frac 12 v_p(\det(h)) - n + \frac d2 \right\rfloor.
 $$
%Furthermore,
%$$d \in \begin{cases} \{0, 1, \cdots, 2n\} & \text{ if } e_{p, h} = 0; \\
%\{0, 1, \cdots, 2n-1\} & \text{ if } e_{p, h} = 1.\end{cases}$$
\end{remark}

%\ameya{Added half a sentence for Referee 2}
Using $v_p(x, y) \geq {\rm min}(v_p(x), v_p(y))$ and taking valuations of both sides of  \eqref{e:kitaokaformula}, we see that
\begin{equation}\label{e:kitaoconseq1}
 v_p(b_p(k+it, h)) \ge \mathrm{min}_G \left(v_p(\det(G))(2n+1-2k) + v_p(\alpha_{\chi_p}(-{}^tG^{-1}hG^{-1}, k)) \right).
\end{equation}
Above and henceforth, $G$ denotes any element in $M_{2n}(\Z_p)$ such that $2v_p(\det(G)) \le v_p(\det(h))$. Note from the formulas for $\alpha_{\chi_p}(S,k)$ that whenever $\alpha_{\chi_p}(-{}^tG^{-1}hG^{-1}, k)$ is non-zero,
\begin{equation}\label{e:kitcon2}
 v_p(\alpha_{\chi_p}(-{}^tG^{-1}hG^{-1}, k)) \geq \begin{cases}\lfloor (2n-d/2) (2n-d/2-2k) \rfloor &\text{ if } d \ge 4n-2k, \\
 -k^2 &\text{ if }  d < 4n-2k. \end{cases}
\end{equation}
%\fbox{
%\begin{minipage}{90ex}
%I get
%\begin{equation}%\label{e:kitcon2}
% v_p(\alpha_{\chi_p}(-{}^tG^{-1}hG^{-1}, k)) =
% \begin{cases}
%   \lfloor (2n-d/2) (2n-d/2-2k) \rfloor &\text{ if } d > 4n-2k, \\
%   -k^2+v_p(1+\epsilon\chi_p(p)) &\text{ if } d = 4n-2k, \\
%   -k^2+v_p(1-\chi_p^2(p)) &\text{ if }  d < 4n-2k.
% \end{cases}
%\end{equation}
%In particular,
%\begin{equation}%\label{e:kitcon2}
% v_p(\alpha_{\chi_p}(-{}^tG^{-1}hG^{-1}, k))\geq
% \begin{cases}
%   \lfloor (2n-d/2) (2n-d/2-2k) \rfloor &\text{ if } d \geq 4n-2k, \\
%   -k^2&\text{ if }  d < 4n-2k.
% \end{cases}
%\end{equation}
%This is all we need.
%\end{minipage}
%}
We observe here that the case $d<4n-2k$ can only occur for $k$ in the range $n+1 \le k \le 2n-1$.

By Remark \ref{d-det(G)-rmk} and \eqref{e:kitcon2} (and using $k \geq n+1$) the  contribution to the right hand side of (\ref{e:kitaoconseq1}) from the matrices $G$ for which $d \ge 4n-2k$ is always greater than or equal to
\begin{equation}\label{minestimate1}
 \left\lfloor\frac 12 v_p(\det(h)) - n + \frac d2 \right\rfloor (2n+1-2k) + \lfloor (2n-d/2) (2n-d/2-2k) \rfloor.
\end{equation}
One can check that in all cases, $v_p(\det(h))$ even or odd, $d$ even or odd, \eqref{minestimate1} is greater or equal to
\begin{equation}\label{minestimate2}
 (v_p(\det(h))-e_{p, h})(n-k+1/2) + n(n-2k) +e_{p, h}(n-k).
\end{equation}

Next, we consider the matrices $G$ for which $d < 4n-2k$. Again, by Remark \ref{d-det(G)-rmk} and \eqref{e:kitcon2}, the  contribution to the right hand side of (\ref{e:kitaoconseq1}) from the matrices $G$ for which $d < 4n-2k$ is greater than or equal to
\begin{equation}\label{minestimate3}
 \left\lfloor \frac 12 v_p(\det(h)) - n + \frac d2 \right\rfloor (2n+1-2k) -k^2.
\end{equation}
One can check that in all cases \eqref{minestimate3} is greater or equal to
\begin{equation}\label{minestimate4}
 (v_p(\det(h))-e_{p, h})(n-k+1/2) + n(n-2k) +e_{p, h}(n-k).
\end{equation}
Combining the above, we get
\begin{equation}\label{e:kit5}
 v_p(b_p(k+it, T)) \ge (v_p(\det(h))-e_{p, h})(n-k+1/2) + n(n-2k) +e_{p, h}(n-k).
\end{equation}
Proposition \ref{p:key} follows by combining \eqref{e:kit5} and \eqref{e:firstineq}. This completes the proof of Proposition~\ref{integrality-eis-series-thm}.
\subsection{An application of the \texorpdfstring{$q$}{}-expansion principle}\label{s:qexp}

In this short subsection, we continue to focus on the case $m_0=0$ and extend Proposition \ref{integrality-eis-series-thm} from $h = \iota$ to general $h$ and also include the case $N=1$.
\begin{proposition}\label{integrality-eis-series-thm-new}
 Let $k \ge n+1$. If $k=n+1$, assume further that $\chi^2 \neq 1$. Let $p$ be any prime such that $p \nmid 2N$ and $p \ge 2k$. Let $h \in \Sp_{4n}(\hat{\Z})$. Then
 \begin{equation}
  \pi^{n + n^2-(2n+1)k} \Lambda^N\Big(\frac k2\Big) E_{k,N}^{\chi}(Z,0; h) \in M_k(\Gamma_{4n}(N),\mathcal{O}_p).
 \end{equation}
\end{proposition}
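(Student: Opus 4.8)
The strategy is to bootstrap from the already-proved case $h = \iota$ (Proposition \ref{integrality-eis-series-thm}) to an arbitrary $h \in \Sp_{4n}(\hat{\Z})$ using the $q$-expansion principle of Chai–Faltings \cite{ChaiFaltings1990}. The key observation is that the family $\{E_{k,N}^{\chi}(Z,0;h)\}_h$, as $h$ ranges over $\Sp_{4n}(\hat{\Z})$, is stable (up to scalars involving $\chi$) under the right translation action of $\Sp_{4n}(\hat{\Z})$, and that this action descends to an action on the finite set of cusps of a suitable level-$N$ Siegel modular variety. Since these Eisenstein series are all holomorphic Siegel modular forms of weight $k$ and level $\Gamma_{4n}(N)$ (by the special case $m_0=0$ of Proposition \ref{propnearholo}, in the range $k \geq n+1$ with $\chi^2 \neq 1$ when $k = n+1$), the idea is: the normalized form $\pi^{n+n^2-(2n+1)k}\Lambda^N(k/2)E_{k,N}^{\chi}(Z,0;h)$ is a priori defined over $\Q_{\rm ab}$ (Proposition \ref{propnearholo}), it has $p$-integral Fourier coefficients at the cusp corresponding to $\iota$ by Proposition \ref{integrality-eis-series-thm}, and one wants to transport this $p$-integrality to the standard cusp (and hence, by Galois-equivariance of the whole construction, to all $h$).

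**Key steps.** First, I would recall the algebraic-geometric framework: a toroidal compactification $\overline{\mathcal{A}}_{2n,N}$ of the moduli space of principally polarized abelian varieties of dimension $2n$ with full level-$N$ structure, over $\Z[1/N, \zeta_N]$, on which weight-$k$ modular forms correspond to sections of $\omega^{\otimes k}$. The $q$-expansion principle states that a section whose $q$-expansion at one cusp in each connected component of $\overline{\mathcal{A}}_{2n,N} \otimes \overline{\Q_p}$ lies in $\mathcal{O}_p[[q]]$ is itself defined over $\mathcal{O}_p$ (hence has $p$-integral $q$-expansion at every cusp). The connected components of $\overline{\mathcal{A}}_{2n,N}$ over $\overline{\Q}$ are permuted transitively by $(\Z/N\Z)^\times$ acting through the Weil pairing / determinant of the level structure; in classical terms this is exactly the action of $\tau \in \hat{\Z}^\times$ appearing in the definition of $Q_\tau$ in \eqref{Qtaueq}. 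So the second step is to check that the translates $E_{k,N}^{\chi}(Z,0;\iota h_\tau)$, where $h_\tau = \mat{\tau I_{2n}}{}{}{\tau^{-1}I_{2n}}$ ranges over a set of coset representatives realizing these components, all have $p$-integral $q$-expansions at the relevant cusps. Here one uses that $E_{k,N}^{\chi}(Z,0;\iota h_\tau)$ differs from a Galois conjugate (under $\mathrm{Gal}(\Q_{\rm ab}/\Q)$, which acts on $\zeta_N$ hence permutes the $\tau$'s) of $E_{k,N}^{\chi}(Z,0;\iota)$ only by a value of $\chi$, a root of unity, hence a $p$-unit — so the $p$-integrality of Proposition \ref{integrality-eis-series-thm} is preserved on each component. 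Then the $q$-expansion principle yields $p$-integrality of the normalized $E_{k,N}^{\chi}(Z,0;h)$ at every cusp, in particular at the standard cusp ($h = 1$), which after translating back by an arbitrary $h$ gives the statement for all $h$. The case $N=1$ is then handled by noting that a level-one form is in particular a level-$N$ form for any auxiliary $N>1$ prime to $p$, and applying the result just proved (one must check the normalizing factor $\Lambda^N(k/2)$ transforms $p$-integrally when passing between levels, which is automatic since we only remove Euler factors at primes dividing $N$, all prime to $p$).

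**Main obstacle.** The delicate point is the precise bookkeeping of cusps and components: one needs to know that (i) $\Gamma_{4n}(N)$-level forms correspond to sections over $\overline{\mathcal{A}}_{2n,N}$ in a way compatible with the classical-to-adelic dictionary used throughout the paper, (ii) the double cosets indexing the cusps, modulo the right $\Sp_{4n}(\hat{\Z})$-action, are accounted for by a finite set that includes $\iota$ and its $h_\tau$-translates in a way hitting every connected component, and (iii) the Fourier expansion of $E_{k,N}^{\chi}(Z,0;h)$ at the standard cusp is literally the $q$-expansion of the corresponding algebraic section at the corresponding algebraic cusp — no stray transcendental or denominator-introducing factors intervene. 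The cleanest route is probably to invoke the formulation of the $q$-expansion principle already packaged for Siegel modular forms with this kind of adelic level structure (as in the references used for the analogous classical statements), and to phrase the component-permutation step entirely in terms of the known behavior of $E_{k,N}^{\chi}$ under $\Aut(\C)$ recorded in \cite[Prop.\ 6.8]{PSS17} together with \eqref{Qninveq1}. Once that dictionary is set up, the argument is short; almost all the real work was in Proposition \ref{integrality-eis-series-thm}.
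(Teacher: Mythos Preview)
Your core strategy matches the paper's: bootstrap from the case $h=\iota$ (Proposition~\ref{integrality-eis-series-thm}) to general $h$ via the $q$-expansion principle. But the paper's execution is considerably more direct than your sketch, and your version has one genuine gap.

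For $N>1$, the paper bypasses the component/Galois apparatus entirely. It records the elementary identity $E^\chi_{k,N}(Z,0;h) = E^\chi_{k,N}(Z,0;\iota)\big|_k(\iota^{-1}h)$ for $h \in \Sp_{4n}(\Z)$ (diagonally embedded), immediate from the adelic definition, and then applies the classical $q$-expansion principle in the form: for $R \supset \Z[e^{2\pi i/N}, e^{2\pi i/3}, 1/6N]$ and any $g \in \Sp_{4n}(\Z)$, one has $F \in M_k(\Gamma_{4n}(N), R) \iff F|_k g \in M_k(\Gamma_{4n}(N), R)$. Since $p \nmid 6N$ one may take $R = \mathcal{O}_p$, and since $\zeta_N \in \mathcal{O}_p$ a \emph{single} cusp suffices---no component bookkeeping is needed. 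Strong approximation (the Eisenstein series depends only on the left $K(N)$-coset of $h$, and $\Sp_{4n}(\Z)$ surjects onto $\Sp_{4n}(\Z/N\Z)$) then extends the conclusion to all $h \in \Sp_{4n}(\hat\Z)$. Your matrices $h_\tau = \mat{\tau I_{2n}}{}{}{\tau^{-1}I_{2n}}$ lie in $\Sp_{4n}(\hat\Z)$ and do not move between geometric components of the moduli space (that requires non-trivial similitude), so that part of your outline is off-target, though ultimately harmless.

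The genuine gap is your $N=1$ argument. The Eisenstein series $E_{k,1}^{\bf 1}$ is \emph{not} equal to $E_{k,N}^{\bf 1}$ for any $N>1$: the defining local sections at primes dividing $N$ are different. So ``a level-one form is in particular a level-$N$ form, apply the result just proved'' does not work---the result just proved concerns a \emph{specific} Eisenstein series, not an arbitrary level-$N$ form, and you have no $p$-integrality input for $E_{k,1}^{\bf 1}$ at any cusp. The paper closes this gap by invoking an identity of Shimura (Proposition~2.4 of \cite{shi83}) expressing $E_{k,1}^{\bf 1}(Z,0;1)$ as a finite sum $\sum_\tau E_{k,2}^{\bf 1}(Z,0;1)|_k\tau$ over $\tau \in \Sp_{4n}(\Z)$, then applying the already-established $N=2$ case together with the $q$-expansion principle once more.
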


 Before proving this proposition, let us begin with some related discussion. Let $P$ be the Siegel parabolic subgroup of $\Sp_{2m}$. By a \emph{cusp} of $\Gamma(N)$ we mean an element of the (finite) double coset space
\begin{equation}\label{step2eq2}
 \Gamma(N)\backslash\Sp_{2m}(\Q)/P(\Q)\cong\Gamma(N)\backslash\Sp_{2m}(\Z)/P(\Z).
\end{equation}
Let $\gamma\in P(\Z)$. Let $F\in M_k(\Gamma(N))$ and assume that the ring $R$ contains $e^{2\pi i/N}$. It is easy to see that
\begin{equation}\label{step2eq3}
 F\in M_k(\Gamma(N),R)\qquad\Longleftrightarrow\qquad F|_k\gamma\in M_k(\Gamma(N),R).
\end{equation}
Hence, for given $F\in M_k(\Gamma(N),R)$ and $g\in\Sp_{2m}(\Z)$, whether $F|_kg$ also has Fourier coefficients in $R$, depends only on the double coset $\Gamma(N)gP(\Z)$, i.e., on the cusp determined by $g$. In fact, provided that $R$ contains $1/N$, the following much stronger statement is true.
\begin{proposition}\label{qexpansiontheorem}
 Let $N$ be an integer, and let $k,m$ be positive integers.  Let $F\in M_k(\Gamma(N))$ be a Siegel modular form of degree $m$ and weight $k$ with respect to the principal congruence subgroup $\Gamma(N)$ of $\Sp_{2m}(\Z).$ Let $R$ be a subring of $\C$ containing $\Z[e^{2\pi i /N}, e^{2\pi i /3}, 1/N, 1/6]$. Then, for any $g\in\Sp_{2m}(\Z)$,
 \begin{equation}\label{step2eq4}
  F\in M_k(\Gamma(N),R)\qquad\Longleftrightarrow\qquad F|_kg\in M_k(\Gamma(N),R).
 \end{equation}
\end{proposition}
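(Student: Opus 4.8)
The plan is to recognize this as the arithmetic $q$-expansion principle for the Siegel modular variety of degree $m$, which in this generality is due to Faltings--Chai \cite{ChaiFaltings1990}; once that geometric input is available, the remaining work is to translate between their algebro-geometric framework and the classical language used here, and to keep track of the action of $\Sp_{2m}(\Z)$ on the set of cusps. I would begin with a reduction to the case $N\ge 3$. If $N\in\{1,2\}$ one replaces $N$ by $N'=\mathrm{lcm}(N,3)\in\{3,6\}$: then $\Gamma(N')\subset\Gamma(N)$, so $M_k(\Gamma(N))\subset M_k(\Gamma(N'))$ with the same Fourier expansions and the same operators $F\mapsto F|_kg$, and the hypothesis on $R$ guarantees $\Z[e^{2\pi i/N'},1/N']\subset R$ precisely because $1$ and $2$ are coprime to $3$ — this is the role of $e^{2\pi i/3}$ and $1/6$. (For $N=1$ the statement is vacuous since $\Gamma(1)=\Sp_{2m}(\Z)$.) As $\Gamma(N)$ is normal in $\Sp_{2m}(\Z)$, $F|_kg$ again lies in $M_k(\Gamma(N))\subset M_k(\Gamma(N'))$, so the level-$N$ case follows formally from the level-$N'$ case; hence we may assume $N\ge3$.

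Next I would set up the geometry. For $N\ge3$, the moduli problem of principally polarized abelian schemes of relative dimension $m$ with a full symplectic level-$N$ structure normalized by the root of unity $e^{2\pi i/N}$ is represented by a smooth quasi-projective scheme $\mathcal{A}$ over $\Z[1/N,e^{2\pi i/N}]$, with $\mathcal{A}(\C)=\Gamma(N)\backslash\H_m$ and geometrically integral fibres; Faltings--Chai endow it with a smooth proper toroidal compactification $\overline{\mathcal{A}}$ over the same base, with normal, geometrically integral fibres, carrying the Hodge line bundle $\omega$. For a subring $R$ of $\C$ containing $\Z[1/N,e^{2\pi i/N}]$, set $\mathcal{M}_k(R):=H^0(\overline{\mathcal{A}}_R,\omega^{\otimes k})$. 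Two comparison statements are needed. First, over $\C$ there is a canonical identification $\mathcal{M}_k(\C)=M_k(\Gamma(N))$, under which the completion of $\overline{\mathcal{A}}$ along the boundary stratum attached to a cusp (an element of the double coset space \eqref{step2eq2}) yields the classical Fourier expansion of $F$ at that cusp, the standard cusp recovering the expansion at $i\infty$ used to define $M_k(\Gamma(N),R)$; in particular, under these identifications the map $q$-expansion-at-a-boundary-stratum is $R$-rational. Second, the finite group $\Gamma(N)\backslash\Sp_{2m}(\Z)\cong\Sp_{2m}(\Z/N)$ acts on $\overline{\mathcal{A}}$ over $\Z[1/N,e^{2\pi i/N}]$ by precomposing the level structure, permuting the boundary strata compatibly with its action on \eqref{step2eq2}, so that the Fourier expansion of $F|_kg$ at the standard cusp equals the Fourier expansion of $F$ at the cusp $\Gamma(N)gP(\Z)$.

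The key input is then the $q$-expansion principle of Faltings--Chai: (i) $H^0(\overline{\mathcal{A}}_{(-)},\omega^{\otimes k})$ commutes with the flat base change $R\hookrightarrow\C$, so $\mathcal{M}_k(R)\otimes_R\C=\mathcal{M}_k(\C)$; and (ii) the $q$-expansion map from $\mathcal{M}_k(R)$ to the ring of formal series along the standard boundary stratum is injective, because $\overline{\mathcal{A}}_R$ is integral and that stratum is nonempty. Combining (i), (ii), and the flatness of $\C$ over $R$, one obtains that a form $F\in M_k(\Gamma(N))=\mathcal{M}_k(\C)$ lies in the image of $\mathcal{M}_k(R)$ \emph{if and only if} all of its Fourier coefficients at $i\infty$ lie in $R$, i.e.\ iff $F\in M_k(\Gamma(N),R)$. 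Granting this: if $F\in M_k(\Gamma(N),R)$, then $F$ comes from $\mathcal{M}_k(R)$, so its Fourier expansion at the cusp $\Gamma(N)gP(\Z)$ has coefficients in $R$; by the second comparison statement this is exactly the Fourier expansion of $F|_kg$ at $i\infty$, so $F|_kg\in M_k(\Gamma(N),R)$. The converse is the same assertion applied to $F|_kg$ in place of $F$ and $g^{-1}$ in place of $g$, using $(F|_kg)|_kg^{-1}=F$ and $F|_kg\in M_k(\Gamma(N))$.

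The main obstacle is not the geometric statements (i)--(ii), which are precisely Faltings--Chai, but the two comparison statements: one must match the complex-analytic Fourier expansion at each $0$-dimensional cusp with the algebraic $q$-expansion coming from the corresponding toric boundary chart of $\overline{\mathcal{A}}$ — including the normalization of periods and of the roots of unity $e^{2\pi i/N}$ and $e^{2\pi i/3}$ — so that ``coefficients in $R$'' means exactly the same thing on both sides, and one must check that all of this is equivariant for the $\Sp_{2m}(\Z)$-action permuting the cusps. These matchings are documented in the literature on arithmetic toroidal compactifications, so the proof is largely a careful assembly of references; the one genuinely content-bearing step that is used in full force is the $q$-expansion principle itself.
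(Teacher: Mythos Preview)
Your proposal is correct and takes essentially the same approach as the paper: both invoke the $q$-expansion principle of Faltings--Chai \cite{ChaiFaltings1990} as the key input. The paper's proof consists solely of citations (to \cite{ChaiFaltings1990}, Ichikawa, and Skinner--Urban), whereas you have spelled out the reduction to $N\ge 3$ and the geometric translation in more detail, but the substance is the same.
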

\begin{proof}
This follows from the \emph{$q$-expansion principle}; see Proposition 1.5 of \cite{ChaiFaltings1990}, Sect.~2.1 of \cite{Ichikawa2013}, and Sect.~5.5.6 of \cite{SkinnerUrban2014}.
\end{proof}

We can now finish the proof of Proposition \ref{integrality-eis-series-thm-new}.  We proved in Proposition \ref{integrality-eis-series-thm} that if $N>1$, and $p$ is a prime such that $p \nmid 2N$ and $p \ge 2k$, then
\begin{equation}\label{step2eq8}
  \pi^{n + n^2-(2n+1)k} \Lambda^N\Big(\frac k2\Big) E_{k,N}^{\chi}(Z,0; \iota) \in M_k(\Gamma_{2m}(N),\mathcal{O}_p).
\end{equation}

It is straightforward to verify that if $h$ is an element of $\Sp_{4n}(\Z)$, considered as an element of $\prod_{p<\infty} \Sp_{4n}(\Z_p)$, then
\begin{equation}\label{step2eq9}
 E^\chi_{k,N}(Z,0;h)=E^\chi_{k,N}(Z,0; \iota)|_k(\iota^{-1}h).
\end{equation}
Putting $h=1$ above, and using Proposition \ref{qexpansiontheorem} we get that $$\pi^{n + n^2-(2n+1)k} \Lambda^N\Big(\frac k2\Big) E_{k,N}^{\chi}(Z,0; 1) \in M_k(\Gamma_{4n}(N),\mathcal{O}_p).$$

We can now extend Proposition \ref{integrality-eis-series-thm} to include the case $N=1$. Indeed, by Proposition~2.4 of \cite{shi83},
$$
 E_{k,1}^{\bf 1}(Z,0; 1) = \sum_\tau E_{k,2}^{\bf 1}(Z,0; 1) |_k \tau,
$$
where $\tau$ ranges over a finite subset of $\Sp_{4n}(\Z)$. This shows that Proposition \ref{integrality-eis-series-thm-new} holds in the case $h=1$ (and $N$ arbitrary).

Now, another application of Proposition \ref{qexpansiontheorem} via \eqref{step2eq9}, and observing that $\Sp_{4n}(\Z)$ is dense in $\Sp_{4n}(\hat\Z)$ by strong approximation, we get the proof of Proposition \ref{integrality-eis-series-thm-new}.
\section{Application of differential operators} \label{s:final}
\subsection{An integrality result for the Maass operator}
In this subsection, $m$ will be any positive integer; we will take $m=2n$ in the next subsection. The Maass operator $\Delta_k$ of weight $k$ and degree $m$ is defined on smooth functions $f:\H_m\to\C$ by the following formulas,
\begin{equation}\label{Deltadefeq1}
 \Delta=\det(\partial_{ij}),\qquad
 \partial_{ij}=\begin{cases}
                \displaystyle\frac{\partial}{\partial Z_{ij}}&\text{if }i=j,\\[3ex]
                \displaystyle\frac12\frac{\partial}{\partial Z_{ij}}&\text{if }i\neq j,
               \end{cases}
\end{equation}
\begin{equation}\label{Deltadefeq2}
 (\Delta_kf)(Z)=\det(Z-\bar Z)^{\kappa-k-1}\Delta\Big(\det(Z-\bar Z)^{k-\kappa+1}f(Z)\Big),\qquad\kappa=\frac{m+1}2.
\end{equation}
If $\Gamma$ is a congruence subgroup of $\Sp_{2m}(\Q)$, $f\in M_k(\Gamma)$, then $\Delta_kf\in N_{k+2}(\Gamma)$; see (7.3) of \cite{Shimura1981} or \S19 of \cite{maassbook}.  More generally, $\Delta_k$ takes $N_k(\Gamma)$ to $N_{k+2}(\Gamma)$; see \cite[4.11]{Shimura1987}.

The relation with the operator $\delta_k$ defined in (1.2) of \cite{Pan2005} is
\begin{equation}\label{deltaDeltaeq2}
 \delta_k=\Big(-\frac i2\Big)^m\pi^{-m}\Delta_k.
\end{equation}
For a positive integer $r$, let $\Delta_k^{r}$ and $\delta_k^{r}$ be the $r$-fold iterations of these operators, i.e.,
\begin{equation}\label{Deltardefeq}
 \Delta_k^{r}=\Delta_{k+2(r-1)}\circ\ldots\circ\Delta_{k+2}\circ\Delta_k,\qquad
 \delta_k^{r}=\delta_{k+2(r-1)}\circ\ldots\circ\delta_{k+2}\circ\delta_k.
\end{equation}

From \eqref{deltaDeltaeq2}, we get
\begin{equation}\label{deltaDeltaeq3}
 \delta_k^{r}=\Big(-\frac i2\Big)^{mr}\pi^{-mr}\Delta_k^{r}.
\end{equation}
Define $\lambda_i : M_m(\C) \rightarrow \C$, for $0 \leq i \leq m$, by
\begin{equation}\label{lambda-defn}
 \det(tI_m + Z) = \sum\limits_{i=0}^m \lambda_i(Z)t^{m-i}.
\end{equation}
Let $Y$ be an invertible $m\times m$ matrix, and $\xi$ be any $m\times m$ matrix. Letting $Z=\xi Y$, we get
\begin{equation}\label{lambda-defn2}
 \det(tY^{-1} + \xi) = \sum\limits_{i=0}^m \frac{\lambda_i(\xi Y)}{\det(Y)}t^{m-i}.
\end{equation}
It follows that
\begin{equation}\label{lambda-defn3}
 \frac{\lambda_i(\xi Y)}{\det(Y)}\text{ is a polynomial in the entries of $Y^{-1}$ of degree at most }m-i.
\end{equation}
We also see that
\begin{equation}\label{lambda-defn4}
 \text{if $\xi$ has entries in a ring $R$, then the coefficients of the polynomial }\frac{\lambda_i(\xi Y)}{\det(Y)}\text{ are also in }R.
\end{equation}
Let $F \in M_k(\Gamma_{2m}(N))$ with Fourier expansion
$$
 F(Z) = \sum\limits_{Q \in N^{-1}L} c(Q)e^{2 \pi i {\rm tr}(QZ)},
$$
where $L$ is the set of half-integral, symmetric, positive definite $m\times m$ matrices. Theorem 3.6(a) of \cite{Pan2005} gives the following Fourier expansion of $\delta_k^{r}F$,
% $$
%  \delta_k^{r}F(Z) = \sum\limits_{Q \in N^{-1}L} c(Q) \det(Q)^r\sum\limits_{t=0}^r \binom{r}{t} \sum\limits_{|\hat{L}| \leq mt-t} R_{\hat{L}}\Big(\frac{m+1}2 - k -r\Big)\prod\limits_{j=1}^t \frac{\lambda_{l_j}(4 \pi Q Y)}{\det(4 \pi Q Y)}e^{2 \pi i {\rm tr}(QZ)},
% $$
$$
 \delta_k^{r}F(Z) = \sum\limits_{Q \in N^{-1}L} c(Q) \sum\limits_{t=0}^r \binom{r}{t} \det(Q)^{r-t}\sum\limits_{|\hat{L}| \leq mt-t} R_{\hat{L}}\Big(\frac{m+1}2 - k -r\Big)\prod\limits_{j=1}^t \frac{\lambda_{l_j}(4 \pi Q Y)}{\det(4 \pi Y)}e^{2 \pi i {\rm tr}(QZ)},
$$
where $\hat{L}$ runs over all multi-indices $0 \leq l_1 \leq \cdots \leq l_t \leq m$, such that $|\hat{L}| = l_1 + \cdots + l_t \leq mt-t$ and the coefficients $R_{\hat{L}}$ are polynomials with coefficients in $\Z[1/2]$. Note that Theorem 3.6(a) of \cite{Pan2005} is stated for Siegel congruence subgroups, but the proof goes through without change for principal congruence subgroups. By \eqref{lambda-defn3} and \eqref{lambda-defn4}, $\frac{\lambda_{l_j}(4 \pi Q Y)}{\det(4 \pi Y)}$ is a polynomial in the entries of $4\pi Y$ of degree at most $m-l_j$ and with coefficients in $\Z[1/N]$. It follows that  $\delta_k^{r} F \in N_{k+2r}(\Gamma_{2m}(N), \mathcal{O}_p)$, as long as $p\nmid2N$. In view of \eqref{deltaDeltaeq3}, we get the following result.

\begin{proposition}\label{step3prop}
 Let $m,k,N,r$ be positive integers. Let $p$ be a prime not dividing $2N$. Then, for any $F \in M_k(\Gamma_{2m}(N), \mathcal{O}_p)$,
 $$
  \pi^{-mr}\Delta_k^{r} F \in N_{k+2r}(\Gamma_{2m}(N), \mathcal{O}_p).
 $$
\end{proposition}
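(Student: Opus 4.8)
The plan is to deduce the proposition from the identity $\delta_k^{r}=(-i/2)^{mr}\pi^{-mr}\Delta_k^{r}$ of \eqref{deltaDeltaeq3} together with the fact, already established in the discussion above, that $\delta_k^{r}F$ has $p$-integral Fourier coefficients. First I would rewrite \eqref{deltaDeltaeq3} as
$$
 \pi^{-mr}\Delta_k^{r}F=\Big(-\frac i2\Big)^{-mr}\delta_k^{r}F=2^{mr}i^{mr}\,\delta_k^{r}F.
$$
Since $p\nmid 2$, the integer $2^{mr}$ is a unit in $\mathcal O_p$, and $i^{mr}$ is a root of unity, hence also lies in $\mathcal O_p^{\times}$. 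Therefore it is enough to show that $\delta_k^{r}F\in N_{k+2r}(\Gamma_{2m}(N),\mathcal O_p)$.

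For this I would take the explicit Fourier expansion of $\delta_k^{r}F$ given by Theorem 3.6(a) of \cite{Pan2005} --- which, as remarked above, is valid for the principal congruence subgroup $\Gamma_{2m}(N)$ --- and verify that each factor in it is $p$-integral. The coefficients $c(Q)$ of $F$ lie in $\mathcal O_p$ by hypothesis; the $\binom rt$ are integers; the numbers $R_{\hat L}\big(\tfrac{m+1}2-k-r\big)$ lie in $\Z[1/2]\subset\mathcal O_p$ because $R_{\hat L}$ has coefficients in $\Z[1/2]$ and is evaluated at a point of $\tfrac12\Z$; and $\det(Q)^{r-t}\in\Z[1/(2N)]\subset\mathcal O_p$ since $Q\in N^{-1}L$ has entries in $\tfrac1{2N}\Z$. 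Finally, applying \eqref{lambda-defn3} and \eqref{lambda-defn4} with $\xi=Q$ and $Y$ replaced by $4\pi Y$ shows that $\tfrac{\lambda_{l_j}(4\pi QY)}{\det(4\pi Y)}$ is a polynomial of degree at most $m-l_j$ in the entries of $(4\pi Y)^{-1}=\tfrac12(2\pi Y)^{-1}$, with coefficients in the ring generated by the entries of $Q$, i.e.\ in $\Z[1/(2N)]$; rewriting it in the entries of $(2\pi Y)^{-1}$ only multiplies coefficients by powers of $1/2$. Collecting the factors, $\delta_k^{r}F$ has a Fourier expansion $\sum_Q\widetilde P\big((2\pi Y)^{-1};Q\big)e^{2\pi i\,{\rm tr}(QZ)}$ in which every polynomial $\widetilde P$ has coefficients in $\mathcal O_p$. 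Since $\delta_k^{r}$ carries $M_k(\Gamma_{2m}(N))$ into $N_{k+2r}(\Gamma_{2m}(N))$ (being a scalar multiple of the iterated Maass operator $\Delta_k^{r}$), this shows $\delta_k^{r}F\in N_{k+2r}(\Gamma_{2m}(N),\mathcal O_p)$, which completes the argument.

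I do not anticipate a serious obstacle: the entire proof is a bookkeeping exercise in tracking denominators through Panchishkin's formula. The only two points that deserve care are (i) that passing from Panchishkin's $4\pi Y$ normalization to the $(2\pi Y)^{-1}$ normalization built into the definition of $N_k(\Gamma,R)$ costs only powers of $2$, which is harmless because $p\neq 2$; and (ii) that Theorem 3.6(a) of \cite{Pan2005}, although stated there for Siegel congruence subgroups, goes through verbatim for $\Gamma_{2m}(N)$. Both of these have already been addressed in the text preceding the proposition.
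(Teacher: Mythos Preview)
Your proposal is correct and follows essentially the same approach as the paper: the paper's proof is contained in the discussion immediately preceding the proposition, where Panchishkin's formula is invoked to show $\delta_k^{r}F\in N_{k+2r}(\Gamma_{2m}(N),\mathcal O_p)$, and the result then follows from \eqref{deltaDeltaeq3}. Your write-up is in fact slightly more careful than the paper on two points---you make explicit that the change of variable from $(4\pi Y)^{-1}$ to $(2\pi Y)^{-1}$ only introduces powers of $1/2$, and you correctly note the coefficients lie in $\Z[1/(2N)]$ rather than $\Z[1/N]$---but these are cosmetic refinements of the same argument.
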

\subsection{Completion of the proof of Theorem \ref{main-Eis-arith-thm}}
We can now complete the proof of part 2 of Theorem \ref{main-Eis-arith-thm}.
\begin{proposition}\label{step4prop}
 Suppose that $0 \le m_0 \le \frac{k-n-1}{2}$ is an integer  and $h \in \Sp_{2m}(\hat{\Z})$. If $m_0 = \frac{k-n-1}{2}$, assume further that $\chi^2 \neq 1$.  Then, for any prime $p$ with $p\nmid2N$ and $p\geq 2k$,
 $$
\pi^{n+n^2-(2n+1)k+(2n+2)m_0} \Lambda^N\Big(\frac{k-2m_0}2\Big)E_{k,N}^{\chi}(Z,-m_0; h) \in N_k(\Gamma_{4n}(N), \mathcal{O}_p).
 $$
 Here, $\Lambda^N$ is the factor defined in \eqref{LambdaNdefeq}.
\end{proposition}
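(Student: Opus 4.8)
The plan is to reduce to the case $m_0=0$, already settled in Proposition~\ref{integrality-eis-series-thm-new}, by applying the Maass weight-raising operator $m_0$ times. Put $k':=k-2m_0$. The hypothesis $0\le m_0\le\frac{k-n-1}{2}$ is exactly the condition $k'\ge n+1$, and the extra hypothesis in the boundary case $m_0=\frac{k-n-1}{2}$ becomes the condition $\chi^2\ne1$ in the boundary case $k'=n+1$. Since $p\nmid2N$ and $p\ge2k\ge2k'$, Proposition~\ref{integrality-eis-series-thm-new} applied with weight $k'$ in place of $k$ gives
$$
 \pi^{n+n^2-(2n+1)k'}\,\Lambda^N\!\Big(\tfrac{k'}2\Big)\,E_{k',N}^{\chi}(Z,0;h)\ \in\ M_{k'}(\Gamma_{4n}(N),\mathcal{O}_p).
$$
Note that $\Lambda^N(k'/2)=\Lambda^N\big(\tfrac{k-2m_0}2\big)$ is precisely the normalizing $L$-factor in the assertion to be proved, so this factor need not be altered again.

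The second step is to identify $E_{k,N}^{\chi}(Z,-m_0;h)$, up to an explicit constant, with the image of this holomorphic Eisenstein series under the iterated Maass operator. By the theory of differential operators on Siegel Eisenstein series --- Shimura \cite{shibook2}, together with Panchishkin \cite{Pan2005} (whose Theorem~3.6, quoted in Section~\ref{s:final}, controls the Fourier expansion of $\delta_k^{r}$) --- one has an identity
$$
 \Delta_{k'}^{m_0}\,E_{k',N}^{\chi}(Z,0;h)\ =\ c_{k,m_0}\cdot E_{k,N}^{\chi}(Z,-m_0;h)
$$
in degree $m=2n$, where $c_{k,m_0}$ is a nonzero constant which, up to a power of $2$, is a product of finitely many linear factors in $k$, $m_0$ and $\kappa=\tfrac{2n+1}{2}$. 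Indeed, both sides are nearly holomorphic Siegel Eisenstein series of degree $2n$, weight $k$ and level $N$ attached to the same character and section, hence proportional; the constant is read off by comparing leading Fourier coefficients via the explicit formulas, and its nonvanishing in the relevant range is precisely what the excluded case $m_0=\frac{k-n-1}{2}$, $\chi^2=1$ would destroy (compare Proposition~\ref{propnearholo}). The linear factors occurring in $c_{k,m_0}$ are suitably bounded, and --- using that $p\ge2k$ (so in fact $p>2k$, since $p$ is prime and $2k$ is composite) and $p\ne2$ --- one checks that $c_{k,m_0}\in\mathcal{O}_p^{\times}$.

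Finally, the two steps are combined. Applying $\pi^{-2nm_0}\Delta_{k'}^{m_0}$ --- which by Proposition~\ref{step3prop} (with $m=2n$, $r=m_0$) preserves $\mathcal{O}_p$-integrality --- to the modular form displayed in the first step, and using the identity of the second step together with the elementary computation $-2nm_0+n+n^2-(2n+1)k'=n+n^2-(2n+1)k+(2n+2)m_0$, one obtains
$$
 \pi^{\,n+n^2-(2n+1)k+(2n+2)m_0}\,\Lambda^N\!\Big(\tfrac{k-2m_0}2\Big)\,c_{k,m_0}\,E_{k,N}^{\chi}(Z,-m_0;h)\ \in\ N_k(\Gamma_{4n}(N),\mathcal{O}_p).
$$
Since $c_{k,m_0}$ is a $p$-adic unit it may be discarded, which is exactly the assertion of the proposition and hence completes the proof of part~2 of Theorem~\ref{main-Eis-arith-thm}.

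I expect the genuinely delicate point to be the second step: extracting the precise proportionality constant $c_{k,m_0}$ from the formulas of Shimura and Panchishkin --- keeping careful track of the normalization \eqref{deltaDeltaeq2}--\eqref{deltaDeltaeq3} relating $\delta_k^{r}$ to $\Delta_k^{r}$, of how the archimedean parameter shifts under each application of $\Delta$, and of the power of $2$ hidden in $c_{k,m_0}$ --- and then verifying both that $c_{k,m_0}\ne0$ and that it remains a unit at $p$ throughout the entire range $p\ge2k$. The remaining steps are bookkeeping with the $\pi$-powers and a direct appeal to Propositions~\ref{integrality-eis-series-thm-new} and~\ref{step3prop}.
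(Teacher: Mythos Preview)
Your proposal is correct and follows essentially the same route as the paper: reduce to $m_0=0$ via Proposition~\ref{integrality-eis-series-thm-new}, invoke the identity expressing $E_{k,N}^{\chi}(Z,-m_0;h)$ as an explicit constant times $\Delta_{k-2m_0}^{m_0}E_{k-2m_0,N}^{\chi}(Z,0;h)$, check that the constant is a $p$-unit when $p\ge2k$, and apply Proposition~\ref{step3prop}. The paper makes the second step precise by citing Shimura's (17.20) in \cite{shibook2}, which gives the constant explicitly as $d^{-1}(-4)^{nm_0}$ with
\[
 d=\prod_{a=1}^{2n}\prod_{b=1}^{m_0}\Big(2m_0-k-b+\tfrac{a+1}{2}\Big);
\]
each factor lies strictly between $1-k$ and $0$, so $d\ne0$ and $p\nmid d$ for $p\ge2k$. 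One small correction: the nonvanishing of $c_{k,m_0}$ has nothing to do with the excluded case $m_0=\tfrac{k-n-1}{2}$, $\chi^2=1$ --- that exclusion is needed only so that $E_{k-2m_0,N}^{\chi}(Z,0;h)$ is actually a (holomorphic) modular form, i.e.\ so that Proposition~\ref{integrality-eis-series-thm-new} applies at weight $k'=n+1$.
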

\begin{proof}
The operator $\Delta_k^{r}$ defined in \eqref{Deltardefeq} coincides with the operator $\Delta_k^r$ defined in the proof of Theorem~17.9 of \cite{shibook2}; see Proposition 7.2 of \cite{Shimura1981}. Setting $p=m_0$, $q=k-2m_0$ and $s=\frac{k-2m_0}2$ in (17.20) of \cite{shibook2}, and observing the shift \eqref{step1eq1}, we get
\begin{equation}\label{step4eq1}
 d^{-1}(-4)^{nm_0}\cdot\Delta^{m_0}_{k-2m_0}(E_{k-2m_0,N}^\chi(Z, 0;h)) =  E_{k,N}^\chi(Z, -m_0; h)
\end{equation}
with
\begin{equation}\label{step4eq2}
 d=\prod_{a=1}^{2n}\,\prod_{b=1}^{m_0}\Big(2m_0-k-b+\frac{a+1}2\Big).
\end{equation}
The condition $p\geq 2k$ assures that $p\nmid d$.

\begin{comment}\fbox{
\begin{minipage}{80ex}
We have
$$
 2m_0-k-2m_0+\frac{1+1}2\leq2m_0-k-b+\frac{a+1}2\leq2m_0-k-1+\frac{2n+1}2,
$$
i.e.,
$$
 1-k\leq2m_0-k-b+\frac{a+1}2\leq2m_0-k-1/2+n,
$$
i.e., since $m_0 \le \frac{k-n-1}{2}$,
$$
 1-k\leq2m_0-k-b+\frac{a+1}2\leq-3/2.
$$
Hence, in absolute value, the factors making up $d$ go up to $k-1$. Therefore it seems we need to assume $p\geq k$.
\end{minipage}
}\end{comment}

By Proposition \ref{integrality-eis-series-thm-new},
\begin{equation}\label{step2eq10b}
 \pi^{n+n^2-(2n+1)(k-2m_0)} \Lambda^N\Big(\frac{k-2m_0}2\Big) E_{k-2m_0,N}^{\chi}(Z,0;h) \in M_{k-2m_0}(\Gamma_{4n}(N),\mathcal{O}_p).
\end{equation}
Hence the assertion follows from Proposition \ref{step3prop}.
\end{proof}

%\ameya{I fixed some of the entries in the bibliography in response to Referee 2}
%
%
%
\addcontentsline{toc}{section}{Bibliography}
\bibliography{eisenstein}{}
\end{document}